\documentclass[3p,times]{elsarticle}

\usepackage{fontenc}
\usepackage{amssymb}
\usepackage{lipsum}
\usepackage{amsfonts}
\usepackage{graphicx}
\usepackage{epstopdf}
\usepackage{algorithmic}
\usepackage{latexsym}
\usepackage{color}
\usepackage{caption}
\usepackage{multirow}
\usepackage{setspace}
\usepackage{appendix}
\usepackage{float}
\usepackage{amsmath}
\usepackage{amscd}
\usepackage{mathrsfs}
\usepackage{amssymb}
\usepackage{multirow}
\usepackage{threeparttable}
\usepackage{booktabs}
\usepackage{rotating}
\usepackage{subeqnarray}
\usepackage{cases}
\usepackage{bbm}
\usepackage{amsthm}
\usepackage{txfonts}
\usepackage{bm}
\usepackage{tabularx}

\def\cL{{\cal L}}

\def\bbE{{\mathbb{E}}}

\def\be{\begin{eqnarray}}
\def\ee{\end{eqnarray}}
\def\ba{\begin{array}}
\def\ea{\end{array}}

\allowdisplaybreaks[4]

\begin{document}

\allowdisplaybreaks[4]

\newtheorem{lemma}{Lemma}[section]
\newtheorem{remark}{Remark}[section]
\newtheorem{example}{Example}[section]
\newtheorem{theorem}{Theorem}[section]
\newtheorem{corollary}{Corollary}[section]
\newtheorem{conjecture}{Conjecture}[section]
\newtheorem{definition}{Definition}[section]
\newtheorem{proposition}{Proposition}[section]
\newtheorem{condition}{Condition}[section]

\numberwithin{equation}{section}

\begin{frontmatter}

\title{Local discontinuous Galerkin method for nonlinear BSPDEs of Neumann boundary conditions with deep backward dynamic programming time-marching}

\author{
Yixiang Dai\tnoteref{label1}\quad\quad 
Yunzhang Li\tnoteref{label2}\quad\quad 
Jing Zhang\tnoteref{label1}}
\tnotetext[label1]{School of Mathematical Sciences, Fudan University, Shanghai 200433, China. { E-mail:} {yixiangdai@hotmail.com;  zhang\_jing@fudan.edu.cn}. 
The research of the third author is supported by the National Key R\&D Program of China (2022YFA1006101, 2018YFA0703903), and by the NSFC (12271103, 12031009) and by the Shanghai Science and Technology Commission Grant (21ZR1408600).}
\tnotetext[label2]{Corresponding Author. Research Institute of Intelligent Complex Systems, Fudan University, Shanghai 200433, China. { E-mail:} {li\_yunzhang@fudan.edu.cn}. Research supported by the National Natural Science Foundation of China (No.~12301566), and by the Science and Technology Commission of Shanghai Municipality (No.~23JC1400300), and by the Chenguang Program of Shanghai Education Development Foundation and Shanghai Municipal Education Commission (No.~22CGA01).} 



\begin{abstract}
This paper aims to present a local discontinuous Galerkin (LDG) method for solving backward stochastic partial differential equations (BSPDEs) with Neumann boundary conditions. We establish the $L^2$-stability and optimal error estimates of the proposed numerical scheme. Two numerical examples are provided to demonstrate the performance of the LDG method, where we incorporate a deep learning algorithm to address the challenge of the curse of dimensionality in backward stochastic differential equations (BSDEs). The results show the effectiveness and accuracy of the LDG method in tackling BSPDEs with Neumann boundary conditions.
\end{abstract}

\begin{keyword}
local discontinuous Galerkin method, backward stochastic partial differential equations, Neumann boundary problems, stability analysis, error estimates, deep learning algorithm

\vspace{0.5cm}
\MSC 65M60 \sep 65M15 \sep 65Z05

\end{keyword}

\end{frontmatter}

\section{Introduction}
\thispagestyle{empty}
Let $(\Omega,\bar{\mathscr{F}},\{\bar{\mathscr{F}}_t\}_{t\geq0},\mathbb{P})$ be a complete filtered probability space satisfying the usual conditions. The filtration $\{\bar{\mathscr{F}}_t\}_{t\geq0}$ is generated by two independent $d-$dimensional Wiener processes $W$ and $B$. We denote by $\{\mathscr{F}_t\}_{t\geq0}$ the natural filtration generated by $W$, together with all $\mathbb{P}-$null sets. The terminal time $T$ is a fixed positive number. The predictable $\sigma-$algebra on $\Omega\times[0,T]$ associated with $\{\mathscr{F}_t\}_{t\geq0}$ and $\{\bar{\mathscr{F}}_t\}_{t\geq0}$ is denoted by $\mathscr{P}$ and $\bar{\mathscr{P}}$, respectively. 

In this paper we present a local discontinuous Galerkin (LDG) method for the following backward stochastic partial differential equations (BSPDEs) with Neumann boundary conditions:
\begin{equation}\label{BSPDE_in_Qiu_paper}
\left\{
\begin{aligned}
       &-d u(x,t) = 
    \left[ \frac{1}{2} \left( |\sigma|^2 
    + |\bar{\sigma}|^2 \right) u_{xx}
    + \sigma \psi_x
    + \Gamma(\cdot, u, u_x, \psi) \right](x,t) dt 
     - \psi(x,t) dW_t, \quad\quad (x,t) \in [0, b]\times [0, T),\\
    & u_x(0,t) = g(0,t),\quad u_x(b,t) = g(b,t),\quad\quad t\in[0,T],\\
    &u(x,T) = G(x), \quad\quad  x \in [0, b], 
    \end{aligned}
    \right.
\end{equation}
which is associated to the following stochastic control problem: 
\begin{equation}\label{target_function_for_stochastic_control}
    \mathop{\text{min}}_{\theta} \mathbb{E} 
    \left[ \int_0^T f(t,X_t,\theta_t) dt 
    + \int_0^T g(t,X_t) dL_t 
    + \int_0^T g(t,X_t) dU_t + G(X_T) \right],
\end{equation}
subject to 
\begin{equation}\label{equation_for_state_process}
    \left\{
    \begin{aligned}
    &dX_t = \beta(t,X_t, \theta_t) dt 
    + \sigma(t,X_t) dW_t 
    + \bar{\sigma}(t,X_t) dB_t + dL_t - dU_t,
     \quad t \in (0, T], \\
    &X_0 = x,\quad L_0 = U_0 = 0, \\
    &0 \leq X_t \leq b, \quad\text{a.s.}, \\
    &\int_0^T X_t dL_t = \int_0^T (b - X_t) dU_t = 0, \quad\text{a.s.}, 
    \end{aligned}
    \right. 
\end{equation}
where $L$ and $U$ are two increasing processes. The above optimal control problem \eqref{target_function_for_stochastic_control}-\eqref{equation_for_state_process} has been studied by Bayraktar and Qiu in \cite{10.1214/19-AAP1465}, they proved the existence and uniqueness of a sufficiently regular solution for \eqref{BSPDE_in_Qiu_paper} which is then used to construct the optimal feedback control.

Backward stochastic partial differential equations (BSPDEs), as the infinite-dimensional version of backward stochastic differential equations (BSDEs), have been extensively used in problems related to probability theory and stochastic processes, for instance, in the optimal control problems of partial information stochastic differential equations (SDEs) or of parabolic stochastic partial differential equations (SPDEs), it appears as the dual equation of the Duncan–Mortensen–Zakai filtration equation (see for example \cite{Bensoussan83, Tang1998, XYZ93}), used to construct the stochastic maximum principle. Stochastic Hamilton-Jacobi-Bellman equations are a special type of nonlinear BSPDEs, first proposed by Peng in \cite{Peng92SHJB} to study the non-Markovian stochastic control problems. The relation between forward backward stochastic differential equations (FBSDEs) with random coefficients and BSPDEs which can be viewed as the stochastic Feynman–Kac formula has been established, see for example \cite{HuMaYong}. 


The linear, semilinear even if the quasilinear BSPDEs have been extensively studied; we refer to \cite{DuTang12, HuMaYong, MaYongPTRF, QiuTang, XYZ93} among many others.
However the existing works mainly focus on the BSPDEs in the whole space and the Dirichlet boundary problems, and few on the Neumann problem. For special forms of BSPDEs, the Neumann problem can be obtained through the semigroup method (see \cite{HuPengBSEE, Tessitore96}). In Bayraktar and Qiu \cite{10.1214/19-AAP1465}, they proved the existence and uniqueness of strong solution to the Neumann boundray problem for genenral nolinear BSPDEs.


In most cases, the solutions to BSPDEs are difficult to obtain explicitly, which has led to significant interest in numerical methods for solving BSPDEs. Based on the classical finite element methods, Yang and Zhao~\cite{yang2020finite} first proposed a finite element numerical method for BSPDEs with Dirichlet boundary conditions and provided a proof of second-order convergence. Later, Sun and Zhao~\cite{sun2024generalized} used finite element methods for spatial discretization and a $\theta$-scheme for temporal discretization to develop a finite element $\theta$-algorithm for numerically solving BSPDEs with Dirichlet boundary conditions, proving the convergence of the algorithm with respect to both time and space. However, due to the regularity requirements of basis functions, classical continuous finite element methods typically employ piecewise linear functions, which constrain the achievable accuracy. When high precision is needed, this often necessitates very fine meshes and extended computation times.

Inspired by Bassi and Rebay's use of the discontinuous Galerkin (DG) method for numerically solving compressible Navier-Stokes equations in~\cite{bassi1997high}, the local discontinuous Galerkin (LDG) method was first introduced by Cockburn and Shu in~\cite{cockburn1998local} for solving deterministic convection-diffusion equations. The LDG method extends the DG method, which was developed by Cockburn et al. in~\cite{cockburn1989tvb,cockburn1991runge,cockburn1998runge}. The LDG method is particularly suited for solving nonlinear hyperbolic systems and is characterized by its ease in designing high-order accurate algorithms, adaptability to different geometries, and suitability for efficient parallel computing. 

By employing the LDG method, with high precision the general BSPDEs can be converted into solving high-dimensional BSDEs. Li~\cite{li2022high} presented an LDG method for linear BSPDEs and demonstrated high-order convergence under sufficient regularity of the solution to the original equation, based on the fact that there is an explicit formulation for the solution to linear BSDE. However, for nonlinear case, the computational complexity often grows at least polynomially with the reciprocal of the approximation accuracy and the dimension of the solution space~\cite{novak2008tractability, bellman1966dynamic, novak1997curse}, which is referred to as the curse of dimensionality. To overcome this problem, E and Han et al.~\cite{E_2017, Han_2018} proposed a class of deep BSDE methods based on neural networks for solving high-dimensional partial differential equations (PDEs). The core idea is to transform PDEs into equivalent forward-backward stochastic differential equation systems (FBSDEs) and use neural networks for their numerical solution. These methods address the curse of dimensionality encountered in high-dimensional PDEs but exhibit some instability in the algorithms themselves. Huré and Germain et al.~\cite{hure2020deep, germain2022approximation} introduced a deep backward dynamic programming approach, which applies a backward discrete scheme to derive single-step dynamic programming principles, splits the training task, and iterates to solve the original problem, thus improving algorithm stability. Molla and Qiu~\cite{molla2023numerical} performed numerical solutions for a class of coupled forward-backward stochastic partial differential equations (FBSPDEs), first discretizing space with finite element methods to obtain equivalent FBSDEs, and then applying the deep BSDE algorithm of~\cite{hure2020deep} for numerical solution. This motivates us to use the deep BSDE method of ~\cite{hure2020deep} to numerically solve the BSDEs obtained through the LDG method’s discretization.

The structure of the remaining sections is as follows: Section 2 provides a review of the background on Neumann boundary problems for BSPDEs, introduces the requisite preliminaries for the LDG method, and establishes the notation used throughout the paper. Section 3 applies the LDG method to a class of nonlinear BSPDEs with Neumann boundary conditions, detailing the main results, which include the existence and uniqueness of the LDG numerical solution, the stability of the LDG method, and optimal error estimates. Section 4 presents the proofs of the main theorems. Finally, Section 5 outlines a framework for a class of deep backward dynamic programming algorithms and demonstrates the efficacy of the LDG method through numerical examples.

\section{Preliminaries}
\subsection{Notations and definition of solutions to BSPDEs}
The norm of a $d_1\times d_2$ matrix $M$ is given by $\left|M\right| := \sqrt{\text{trace}
\left(M^{T}M\right)}$. 
For a Banach space $V$, the space $L^2(\Omega,\mathscr{F}_T;V)$ is the set of all $V-$valued $\mathscr{F}_T-$measurable and square integrable random variables. For $p\in[1,\infty)$, we denote by $\mathcal{S}^p_{\mathscr{F}}(0,T;V)$ the set of all $V-$valued and $\{\mathscr{F}_t\}_{t\geq0}$-adapted c\`adl\`ag processes $(u_t)_{t\in[0,T]}$ such that 
$$\|u\|^p_{\mathcal{S}^p_{\mathscr{F}}(0,T;V)}=\bbE\sup_{t\in[0,T]}\|u_t\|^p_V<\infty.$$
By $\cL^p_{\mathscr{F}}(0,T;V)$ we denote the set of $V-$valued $\{\mathscr{F}_t\}_{t\geq0}$-adapted processes $(u_t)_{t\in[0,T]}$ such that
\begin{equation*}\begin{split}
&\|u\|^p_{\cL^p_{\mathscr{F}}(0,T;V)}:=\bbE\int_0^T\|u_t\|^p_Vdt<\infty,\quad p\in[1,\infty);
\\&\|u\|_{\cL^{\infty}_{\mathscr{F}}(0,T;V)}:={\mbox{ess}\sup}_{(\omega,t)\in\Omega\times[0,T]}\|u_t\|_V<\infty,\quad p=\infty. 
\end{split}\end{equation*}
For $k\in\mathbb{N}^+$ and $p\in[1,\infty)$, $H^{k,p}\left([0,b]\right)$ is the Sobolev space of all real-valued functions $u$ whose up-to $k$th order derivatives belong to $L^p([0,b])$, equipped with the usual Sobolev norm $\|u\|_{H^{k,p}}^{p}$.
By $H^{k,p}_{0}\left([0,b]\right)$, we denote the space of all the trace-zero functions in $H^{k,p}\left([0,b]\right)$. For $k=0$, $H^{0,p}\left([0,b]\right)
:=L^{p}\left([0,b]\right)$. We use $\|\cdot\|$ and $(\cdot,\cdot)$ to denote the norm and the inner product in the usual Hilbert space $L^2([0,b])$ and if there is no confusion, we shall also use $\langle\cdot,\cdot\rangle$ to denote the duality between Hilbert space $H^{k,2}([0,b])$ and its dual spaces.

Throughout this paper, we set for $k\in\mathbb{N}^+$,
$$\mathcal{H}:= \mathcal{S}^2_{\mathscr{F}}(0,T;L^2([0,b]))\cap\mathcal{L}^2_{\mathscr{F}}(0,T;H^{1,2}([0,b]))  \times\mathcal{L}^2_{\mathscr{F}}(0,T;L^2([0,b]))$$
and
$$\mathcal{H}^k:= \mathcal{S}^2_{\mathscr{F}}(0,T;H^{k,2}([0,b]))\cap\mathcal{L}^2_{\mathscr{F}}(0,T;H^{k+1,2}([0,b])) \times\mathcal{L}^2_{\mathscr{F}}(0,T;H^{k,2}([0,b])),$$
and they are complete spaces equipped with the norms 
$$\|(u,\psi)\|^2_{\mathcal{H}}:=\|u\|^2_{\mathcal{S}^2_{\mathscr{F}}(0,T;L^2([0,b]))}+\|u\|^2_{\mathcal{L}^2_{\mathscr{F}}(0,T;H^{1,2}([0,b]))}+\|\psi\|^2_{\mathcal{L}^2_{\mathscr{F}}(0,T;L^2([0,b]))}, $$
and
$$\|(u,\psi)\|^2_{\mathcal{H}^k}:=\|u\|^2_{\mathcal{S}^2_{\mathscr{F}}(0,T;H^{k,2}([0,b]))}+\|u\|^2_{\mathcal{L}^2_{\mathscr{F}}(0,T;H^{k+1,2}([0,b]))}+\|\psi\|^2_{\mathcal{L}^2_{\mathscr{F}}(0,T;H^{k,2}([0,b]))},$$
respectively.

Now we recall the assumptions and the existence and uniqueness result for \eqref{BSPDE_in_Qiu_paper} in \cite{10.1214/19-AAP1465}. 
\\$({\mathcal{A}}_{1})$ There exists constant $\kappa$, such that 
$|\bar{\sigma}(x,t)|^{2}\geq\kappa>0$, $\mathbb{P}$-a.s., for all $ (x,t) \in [0,T] \times \mathbb{R} $.
\\$({\mathcal{A}}_{2})$ The functions $\sigma$, $\bar{\sigma}$ and their spatial partial derivatives $\sigma_x$, $\bar{\sigma}_x$ are $\mathscr{P}\otimes\mathcal{B}(\mathbb{R})$-measurable and essentially bounded by a positive constant $K>0$. 
\\$({\mathcal{A}}_{3})$ For each $(u,v,\psi)\in (L^2([0,b]))^3$, we have $\Gamma(\cdot,u,v,\psi)\in {\cL^2_{\mathscr{F}}(0,T;L^{2}([0,b]))}$; and there exists a nonnegative constant $L$ such that for any $(u_i, v_i,\psi_{i})\in (L^2([0,b]))^3$, $i=1,2$ and any $t\in[0,T]$, there holds
\begin{equation*}
\| \Gamma(\cdot,t,u_{1},v_1,\psi_{1})
- \Gamma(\cdot,t,u_{2},v_2,\psi_{2}) \| \leq L\left(
\|u_{1}-u_{2}\| + \|v_{1}-v_{2}\| + \|\psi_{1}-\psi_{2}\|\right), 
\quad\mathbb{P}-\text{a.s.}
\end{equation*}
$({\mathcal{A}}_{4})$ The terminal $G$ is $\mathscr{F}_T\otimes\mathcal{B}(\mathbb{R})-$measurable and $G\in L^{2}\left(\Omega, \mathcal{F}_{T};
  H^{1,2}\left([0,b]\right)\right)$.

Then we introduce the notion of solutions to BSPDE \eqref{BSPDE_in_Qiu_paper}.
\begin{definition}\label{defsolution}
A pair of processes $(u,\psi)$ is a weak solution to BSPDE \eqref{BSPDE_in_Qiu_paper}, if $(u,\psi)\in\mathcal{H}$ with $u_x(t,\cdot)$ trace-zero at the boundary, and $(u,\psi)$ satisfies BSPDE \eqref{BSPDE_in_Qiu_paper} in the weak sense, that is, for any $\varphi\in C^\infty_0((0,b))$,
$$\langle\varphi\,,\, \frac{1}{2} \left( |\sigma|^2 
    + |\bar{\sigma}|^2 \right) u_{xx}
    + \sigma \psi_x
    + \Gamma(\cdot, u, u_x, \psi)\rangle\in\mathcal{L}^1_{\mathscr{F}}(0,T;\mathbb{R})$$ 
and for any $t\in[0,T]$,
\begin{equation*}
\langle\varphi,u(\cdot,t)\rangle=\langle\varphi,G\rangle+\int_t^T\langle\varphi \,,\, \frac{1}{2} \left( |\sigma|^2 
    + |\bar{\sigma}|^2 \right) u_{xx}
    + \sigma \psi_x
    + \Gamma(\cdot, u, u_x, \psi)\rangle (s)ds-\int_t^T\langle\varphi,\psi(\cdot,s)dW_s\rangle, \quad \mathbb{P}-a.s.
\end{equation*}
The above $(u,\psi)$ is called a strong solution if the regularity is improved $(u,\psi)\in\mathcal{H}^1$.
\end{definition}

Finally, we present the existence and uniqueness of strong solution to BSPDE \eqref{BSPDE_in_Qiu_paper} which can be found in \cite{10.1214/19-AAP1465} (see Theorem 3.1).
\begin{theorem}\label{strong_sol_of_stochastic_HJB_equation}
Let assumptions $({\mathcal{A}}_{1})-({\mathcal{A}}_{4})$ hold. 
Then BSPDE \eqref{BSPDE_in_Qiu_paper} with zero Neumann boundary condition admits a unique strong solution $(u,\psi)$ satisfying
    \begin{equation*}
    \| (u, \psi) \|_{\mathcal{H}^1} 
    \leq C\Big(\|G\|_
    {L^2(\Omega,\mathcal{F}_T;H^{1,2}([0,b]))} 
    + \|\Gamma^0\|_{\mathcal{L}_{\mathscr{F}}^2(0,T;L^2([0,b]))}\Big),
    \end{equation*}
where $\Gamma^0 := \Gamma(\cdot, 0,0,0)$, and the constant $C$ depends on $ L,\ \kappa,\ K$ and $T$. 
\end{theorem}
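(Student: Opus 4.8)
Since subtracting a fixed smooth function carrying the prescribed Neumann trace reduces the general problem to the zero-boundary case stated here, I would establish existence, uniqueness and the estimate by a Banach fixed-point argument built over the associated \emph{linear} equation, followed by a separate regularity bootstrap. For $(\bar u,\bar\psi)\in\mathcal{H}$ put $f:=\Gamma(\cdot,\bar u,\bar u_x,\bar\psi)$, which by $(\mathcal{A}_3)$ lies in $\mathcal{L}^2_{\mathscr{F}}(0,T;L^2([0,b]))$, and let $\mathcal{T}(\bar u,\bar\psi):=(u,\psi)$ be the solution of $-du=\big[\frac12(|\sigma|^2+|\bar\sigma|^2)u_{xx}+\sigma\psi_x+f\big]\,dt-\psi\,dW_t$ with zero Neumann data and terminal value $G$. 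The theorem then follows once I show that $\mathcal{T}$ is well defined on $\mathcal{H}$, obeys a linear a~priori bound, is a strict contraction for an equivalent norm, and that its fixed point in fact lies in $\mathcal{H}^1$ with the asserted estimate.

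\noindent\textbf{The linear $L^2$-estimate.} This is the analytic core. I would construct $(u,\psi)$ by a Galerkin approximation in the Neumann eigenbasis of $-\partial_{xx}$ on $[0,b]$ — solving the resulting finite system of linear BSDEs and passing to the limit — and obtain the bound by applying It\^o's formula to $t\mapsto\|u(\cdot,t)\|^2$ and integrating backward from $T$. Integration by parts turns $(u,u_{xx})$ into $-\|\sqrt a\,u_x\|^2$ plus lower-order terms, the second-order boundary contribution vanishing because $u_x$ is trace-zero; the decisive algebraic observation is that the diffusion coefficient equals exactly $a=\frac12(|\sigma|^2+|\bar\sigma|^2)$, so the dangerous combination $-\|\psi\|^2-2(\sigma u_x,\psi)-2\|\sqrt a\,u_x\|^2$ collapses to $-\|\psi+\sigma u_x\|^2-\|\,|\bar\sigma|\,u_x\|^2$, which by the non-degeneracy $(\mathcal{A}_1)$ is $\le-\kappa\|u_x\|^2$; the terms carrying $\sigma_x,\bar\sigma_x$ are absorbed via $(\mathcal{A}_2)$ and Young's inequality, and the stochastic integral by Burkholder--Davis--Gundy. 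Gr\"onwall's lemma then yields $\|u\|^2_{\mathcal{S}^2_{\mathscr{F}}(0,T;L^2)}+\|u_x\|^2_{\mathcal{L}^2_{\mathscr{F}}(0,T;L^2)}+\|\psi\|^2_{\mathcal{L}^2_{\mathscr{F}}(0,T;L^2)}\le C\big(\|G\|^2_{L^2(\Omega,\mathscr{F}_T;L^2)}+\|f\|^2_{\mathcal{L}^2_{\mathscr{F}}(0,T;L^2)}\big)$ with $C=C(L,\kappa,K,T)$, and the same computation applied to the difference of two solutions, with $(\mathcal{A}_3)$ bounding the difference of sources, shows — after passing to time-weighted norms $e^{\beta t}$ with $\beta$ large, equivalently after first working on $[T-\delta,T]$ and patching finitely many subintervals — that $\mathcal{T}$ is a strict contraction on $\mathcal{H}$, hence has a unique fixed point $(u,\psi)\in\mathcal{H}$.

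\noindent\textbf{Upgrading to $\mathcal{H}^1$.} Because $\Gamma$ is only Lipschitz, I would get the extra space-regularity by the difference-quotient method rather than by differentiating: apply the linear $L^2$-estimate to the translates $\tau_h u-u$ and $\tau_h\psi-\psi$, whose terminal datum is $O(|h|)$ in $L^2$ by $(\mathcal{A}_4)$ and whose source is controlled through $(\mathcal{A}_3)$ and $(\mathcal{A}_2)$; carried out on short time intervals, where the Lipschitz constant of $\Gamma$ cannot overcome the smallness of the a~priori constant, this produces uniform $O(|h|)$ bounds and hence, after patching, $u\in\mathcal{S}^2_{\mathscr{F}}(0,T;H^{1,2})\cap\mathcal{L}^2_{\mathscr{F}}(0,T;H^{2,2})$ and $\psi\in\mathcal{L}^2_{\mathscr{F}}(0,T;H^{1,2})$ — and here the Neumann structure actually helps, since the first-order increments of $u$ see an (asymptotically) zero Dirichlet condition. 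Collecting the estimates and taking $\Gamma^0=\Gamma(\cdot,0,0,0)$ as the residual source in the linear bound gives $\|(u,\psi)\|_{\mathcal{H}^1}\le C\big(\|G\|_{L^2(\Omega,\mathscr{F}_T;H^{1,2})}+\|\Gamma^0\|_{\mathcal{L}^2_{\mathscr{F}}(0,T;L^2)}\big)$.

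\noindent\textbf{Main obstacle.} I expect essentially all the difficulty to sit in the linear $L^2$-estimate, where one must simultaneously control the \emph{stochastic transport term} $\sigma\psi_x$ — tractable only thanks to the precise form of the diffusion coefficient, via the completion of the square above together with the super-parabolicity $(\mathcal{A}_1)$ — \emph{and} the Neumann boundary, whose boundary integrals produced by the integrations by parts must be shown harmless; the tidiest way to decouple the two is probably to eliminate $\sigma\psi_x$ first by composing with the stochastic flow generated by $\sigma$, at the price of working on a randomly deformed interval, and only then to run the energy method. Everything else — the Galerkin passage to the limit, Gr\"onwall, Burkholder--Davis--Gundy, and the difference-quotient bookkeeping — is routine.
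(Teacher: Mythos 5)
The paper does not prove Theorem \ref{strong_sol_of_stochastic_HJB_equation}: it is imported verbatim from Bayraktar and Qiu \cite{10.1214/19-AAP1465} (their Theorem 3.1), so there is no internal proof to compare yours against. Judged on its own terms, your skeleton --- reduction to zero Neumann data, a contraction built over the linearized equation, the energy identity from It\^o's formula for $\|u(\cdot,t)\|^2$, the square completion $-\|\psi\|^2-2(\sigma u_x,\psi)-2\|\sqrt a\,u_x\|^2=-\|\psi+\sigma u_x\|^2-\|\bar\sigma u_x\|^2$ combined with the super-parabolicity $(\mathcal{A}_1)$, and a regularity upgrade that avoids differentiating the merely Lipschitz $\Gamma$ --- is the standard and correct architecture for this kind of well-posedness result; indeed the same square completion is what the authors use at the discrete level in \eqref{stability_step1_T_0}. (One small bookkeeping point: as written your completion of the square discards $\|\psi+\sigma u_x\|^2$ entirely, whereas you must retain a fixed fraction of it to recover the $\|\psi\|^2_{\mathcal{L}^2_{\mathscr{F}}(0,T;L^2)}$ term on the left-hand side; this is routine.)

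Two items are genuine gaps rather than omitted routine work. First, integrating the stochastic transport term by parts gives $2(u,\sigma\psi_x)=-2(\sigma_x u+\sigma u_x,\psi)+2\,\sigma u\psi\big|_0^b$, and this boundary term does \emph{not} vanish from the Neumann condition: only $u_x$, not $u$ or $\psi$, is trace-zero. You name the boundary as the main obstacle but do not resolve it; what is actually needed is either the extra hypothesis $\sigma(0,\cdot)=\sigma(b,\cdot)=0$ (which this paper invokes elsewhere, e.g.\ in the flux definition \eqref{flux_def_0} and in Theorem \ref{theorem_ldg_optimal_error}, but which is not among $(\mathcal{A}_1)$--$(\mathcal{A}_4)$) or an even reflection of the problem across the endpoints so that no boundary terms arise at all. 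Your suggested alternative of composing with the stochastic flow generated by $\sigma$ would trade the boundary term for a randomly deformed domain, which makes the Neumann problem harder, not easier. Second, the difference-quotient bootstrap is the right tool for $\mathcal{H}^1$ regularity when $\Gamma$ is only Lipschitz, but spatial translates do not preserve $[0,b]$, and your claim that the increments ``see an asymptotically zero Dirichlet condition'' is precisely the assertion that must be proved; the clean repair is again to extend $u$ evenly (so that $u_x$ extends oddly and the translated problems live on a boundaryless domain) before running the quotient argument. With those two repairs your outline would amount to a complete proof along the lines of \cite{10.1214/19-AAP1465}.
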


\begin{remark}\label{remark1}
If we add proper assumptions on the boundary condition $g$, the Neumann problem \eqref{BSPDE_in_Qiu_paper} can be equivalently reduced to the case with zero Neumann boundary condition (see pp. 2842 in~\cite{10.1214/19-AAP1465}). Moreover, the existence and uniqueness result for the strong solution to BSPDE \eqref{BSPDE_in_Qiu_paper} can established in more general case where $\Gamma$ can also depend on $u_{xx}$ and $\psi_x$, for the details, the readers are refered to Theorem 3.1 in~\cite{10.1214/19-AAP1465}.  
\end{remark}


\subsection{The LDG method}

For any given positive integer $N$, divide the space interval $[0,b]$ as 
\begin{equation*}
    0=x_{\frac{1}{2}}<x_{\frac{3}{2}}<\cdots<
    x_{N-\frac{1}{2}}<x_{N+\frac{1}{2}}=b.
\end{equation*}
We denote 
\begin{equation*}
    \begin{aligned}
    &I_{j} := 
    \big[x_{j-\frac{1}{2}},x_{j+\frac{1}{2}}\big],\quad
    x_{j}:=\frac{1}{2}\big(x_{j-\frac{1}{2}}+x_{j+\frac{1}{2}}\big),\quad h_{j} :=x_{j+\frac{1}{2}}-x_{j-\frac{1}{2}} ,\quad
    h:=\mathop{\text{max}}_{1\leq j\leq N}h_{j}. 
\end{aligned}
\end{equation*}
The finite element space is
\begin{equation*}
    V_{h}^k:= \Big\{
    v:v\mid_{I_{j}}\in P^{k}(I_{j}),\quad j=1,\cdots,N
    \Big\},
\end{equation*}
where $P^k(I_j)$ is the space of polynomials of total degree at most $k$. Note that functions in $V_{h}^k$ might have discontinuities on an element interface. We denote
$$
u_{j+\frac{1}{2}}^{-}=u\big(x_{j+\frac{1}{2}}^{-}\big),\quad
u_{j+\frac{1}{2}}^{+}=u\big(x_{j+\frac{1}{2}}^{+}\big),
$$
where $u$ is a given function defined on $[0,b]$.

\textbf{Gauss-Radau projection}
We consider the standard $L^2-$projection (denoted by $\mathcal{P}$) and the local Gauss-Radau projections (denoted by $\mathcal{P}^{\pm}$) into space $V^k_h$. For each $j=1,\cdots,N$, the projections satisfy
\begin{equation*}
    \int_{I_j} \left[\mathcal{P}u(x) - u(x)\right] v(x) dx = 0, 
    \quad \forall v \in P^k(I_j)
\end{equation*}
and
\begin{equation}\label{Gauss_Radou_projection_plus}
    \left\{
    \begin{aligned}
    &\int_{I_j} \left[\mathcal{P}^{+}u(x) - u(x)\right] v(x) dx = 0, 
    \quad \forall v \in P^{k-1}(I_j),\\
    & \mathcal{P}^{+}u(x_{j-\frac{1}{2}}^{+})=u(x_{j-\frac{1}{2}})
    \end{aligned}\right. 
\end{equation}
and
\begin{equation}\label{Gauss_Radou_projection_minus}
    \left\{
    \begin{aligned}
    &\int_{I_j} \left[\mathcal{P}^{-}u(x) - u(x)\right] v(x) dx = 0, 
    \quad \forall v \in P^{k-1}(I_j),\\
    & \mathcal{P}^{-}u(x_{j+\frac{1}{2}}^{-})=u(x_{j+\frac{1}{2}}). 
    \end{aligned}\right. 
\end{equation}
Then the projections have the following approximation error (see \cite{Ciarlet2002TheFE}),
\begin{equation}\label{Gauss_Radou_property}
    \|\mathcal{P}u-u\|+\|\mathcal{P}^{+}u-u\|
    +\|\mathcal{P}^{-}u-u\|
    \leq C \|u\|_{H^{k+1,2}} h^{k+1},
\end{equation}
where the constant $C$ does not depend on $h$ and $u$.

\section{The LDG method and the main results}
Due to Remark \ref{remark1}, to define the LDG method, we begin with considering the following equivalent first order system:
\begin{subnumcases}{}
    -d u(x,t)=\left[ p_{x}(x,t)+\lambda(x,t)v(x,t)+
    \mu(x,t)\psi(x,t)+\Gamma(x,t,u,v,\psi) \right]dt
    -\psi(x,t)dW_{t},\label{ldg_eq_a}\\
    v(x,t)=u_{x}(x,t),\label{ldg_eq_b}\\
    p(x,t)=\frac{1}{2}(\sigma^{2}(x,t)
    +\bar{\sigma}^{2}(x,t))v(x,t)
    +\sigma(x,t)\psi(x,t),\label{ldg_eq_c}\\
    v(t,0)=0,\quad v(t,b)=0,\\
    u(x,T)=G(x),\label{ldg_eq_terminal}
\end{subnumcases}
where for simplicity, we denote
\begin{align}
    \lambda(x,t)&:=-\sigma(x,t) \sigma_{x}(x,t)-
    \bar{\sigma}(x,t) \bar{\sigma}_{x}(x,t),\label{def_lambda}\\
    \mu(x,t)&:=-\sigma_{x}(x,t).\label{def_mu}
\end{align}

In order to define the approximating solution $(u_h,v_h,\psi_h,p_h)$, we first multiply \eqref{ldg_eq_a}--\eqref{ldg_eq_c} and \eqref{ldg_eq_terminal} by arbitrary smooth functions $z_{u},z_{v},z_{p},z_{G}$, respectively, and integrate over $I_{j}$, $j=1,\cdots,N$, then we obtain after a simple formal integration by parts that
\begin{align*}
        &-\int_{I_{j}}du(x,t)z_{u}(x) dx
        +\int_{I_{j}}\psi(x,t)z_{u}(x)dxdW_{t}\\\nonumber
        =&\left\{-\int_{I_{j}}p(x,t) (z_{u})_x(x)dx
        +p(x_{j+\frac{1}{2}},t)z_{u}(x_{j+\frac{1}{2}}^{-})
        -p(x_{j-\frac{1}{2}},t)z_{u}(x_{j-\frac{1}{2}}^{+}) + \int_{I_{j}} \left[ \lambda v + \mu\psi
        +\Gamma(\cdot,u,v,\psi)\right](x,t)\, z_{u}(x)dx \right\}dt,\\\nonumber
    &\int_{I_{j}}v(x,t) z_{v}(x)dx
        =-\int_{I_{j}}u(x,t) (z_{v})_x(x)dx
        +u(x_{j+\frac{1}{2}},t)z_{v}(x_{j+\frac{1}{2}}^{-})
        -u(x_{j-\frac{1}{2}},t)z_{v}(x_{j-\frac{1}{2}}^{+})\\\nonumber
        &\int_{I_{j}}p(x,t) z_{p}(x)dx=
        \int_{I_{j}}\left[\frac{1}{2}\left(\sigma^{2}(x,t)
        +\bar{\sigma}^{2}(x,t) \right)v(x,t)
        +\sigma(x,t)\psi(x,t)\right]z_{p}(x)dx,\\\nonumber
        &\int_{I_{j}}u(x,T)z_{G}(x)dx=\int_{I_{j}}G(x)z_{G}(x)dx.
\end{align*}
Then we replace the solutions $u,v,\psi,p$
by $u_{h},v_{h},\psi_{h},p_{h}$, 
and the test functions $z_{u},z_{v},z_{p},z_{G}$ by $z_{h,u},z_{h,v},z_{h,p},z_{h,G}$.

Since the functions in $V_{h}^k$ may be discontinuous at the endpoints, we need to replace the boundary terms obtained by integrating by parts with appropriate numerical fluxes. Precisely speaking, replace 
\begin{equation*}
    u(x_{j+\frac{1}{2}},t), 
    \quad\quad p(x_{j+\frac{1}{2}},t),\quad j=1,2,...,N,
\end{equation*}
with the following numerical fluxes
\begin{equation*}
    \hat{u}(x_{j+\frac{1}{2}},t), 
    \quad\quad \hat{p}(x_{j+\frac{1}{2}},t), \quad j=1,2,...,N,
\end{equation*}
with
\begin{equation}\label{flux_def_0}
    \begin{aligned}
    &\hat{u}(x_{j+\frac{1}{2}},t):=u_{h}(x_{j+\frac{1}{2}}^{-},t),
    \quad\quad \hat{p}(x_{j+\frac{1}{2}},t):=p_{h}(x_{j+\frac{1}{2}}^{+},t),
    \quad j=1,2...,N-1,\\
    &\hat{u}(x_{\frac{1}{2}},t)=\hat{u}(0,t):=u_{h}(0^{+},t),
    \quad\quad \hat{p}(x_{\frac{1}{2}},t)=\hat{p}(0,t):= p_{h}(0^{+},t) = 0,\\
    &\hat{u}(x_{N+\frac{1}{2}},t)=\hat{u}(b,t):=u_{h}(b^{-},t),
    \quad\quad \hat{p}(x_{N+\frac{1}{2}},t)=\hat{p}(b,t):= p_{h}(b^{-},t) = 0.
    \end{aligned}
\end{equation}
where the boundary of numerical flux is defined based on~\eqref{ldg_eq_c} and the Neumann boundary conditions and $\sigma(0,t)=\sigma(b,t)=0$, that is, the exact solution satisfies that $p(0,t)=p(b,t)=0$.
\begin{remark}
Alternatively, the numerical fluxes can be also defined as follows
\begin{equation*}
    \begin{aligned}
    &\hat{u}(x_{j+\frac{1}{2}},t):=u_{h}(x_{j+\frac{1}{2}}^{+},t),
    \quad\quad \hat{p}(x_{j+\frac{1}{2}},t):=p_{h}(x_{j+\frac{1}{2}}^{-},t),
    \quad j=1,2...,N-1.
    \end{aligned}
\end{equation*}
\end{remark}
Then, the approximating solution given by the LDG method is defined as the solution of the following weak formulation: for any $z_{h,u},\ z_{h,v},\ z_{h,p}$ in $V_h^k$, it holds that
\begin{subnumcases}{}\label{approx_eq}
        \int_{I_{j}}du_{h}(x,t)z_{h,u}(x)dx=-\Bigg\{ -\int_{I_{j}}p_{h}(x,t)(z_{h,u})_x(x)dx
        +\hat{p}(x_{j+\frac{1}{2}},t)z_{h,u}(x_{j+\frac{1}{2}}^{-})
        -\hat{p}(x_{j-\frac{1}{2}},t)z_{h,u}(x_{j-\frac{1}{2}}^{+})\nonumber\\
        \quad\quad\quad\quad\quad\quad\quad\quad\quad\quad\quad +\int_{I_{j}}\left[\lambda v_{h} + \mu\psi_{h} + \Gamma(\cdot,u_{h},v_{h},\psi_{h}) \right](x,t) z_{h,u}(x)dx \Bigg\}dt +\int_{I_{j}}\psi_{h}(x,t)z_{h,u}(x)dxdW_{t},\label{approx_eq_a}\\
        \int_{I_{j}}v_{h}(x,t)z_{h,v}(x)dx=-\int_{I_{j}}u_{h}(x,t)(z_{h,v})_x(x)dx +\hat{u}(x_{j+\frac{1}{2}},t)z_{h,v}(x_{j+\frac{1}{2}}^{-})-\hat{u}(x_{j-\frac{1}{2}},t)z_{h,v}(x_{j-\frac{1}{2}}^{+}),\label{approx_eq_b}\\
        \int_{I_{j}}p_{h}(x,t)z_{h,p}(x)dx=\int_{I_{j}}\left[\frac{1}{2}\left(\sigma^{2}(x,t)+\bar{\sigma}^{2}(x,t)\right)v_{h}(x,t) +\sigma(x,t)\psi_{h}(x,t)\right]z_{h,p}(x)dx,\label{approx_eq_c}\\
        \int_{I_{j}}u_{h}(x,T)z_{h,G}(x)dx=\int_{I_{j}}G(x)z_{h,G}(x)dx.\label{approx_eq_d}
\end{subnumcases}

The following theorem concerns on the well-posedness of the approximating equation~\eqref{approx_eq_a}-\eqref{approx_eq_d}.
\begin{theorem}\label{uniqueness_and_existence_of_approx_equation} 
Let assumptions $(\mathcal{A}_{1})-(\mathcal{A}_{4})$ hold. Then for each fixed $N\in\mathbb{N}_{+}$, 
    the approximate equation \eqref{approx_eq} admits a unique solution 
    $(u_{h},v_{h},p_{h},\psi_{h}) \in (V_{h})^{4}$.
\end{theorem}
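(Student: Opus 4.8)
The plan is to reduce the fully coupled system \eqref{approx_eq_a}--\eqref{approx_eq_d} to a finite-dimensional backward stochastic differential equation (BSDE) for the coefficient vector of $u_h$, and then invoke the classical existence-and-uniqueness theory for BSDEs with Lipschitz generators. Concretely, I would first fix a basis of $V_h^k$ on each element and observe that \eqref{approx_eq_b} and \eqref{approx_eq_c} are purely algebraic: for each $t$, they express $v_h$ (respectively $p_h$) as a bounded linear function of $u_h$ (respectively of $v_h$ and $\psi_h$). The mass matrix on each $I_j$ is symmetric positive definite, so these relations are uniquely solvable; composing them gives $v_h = \mathcal{A}_h(t) u_h$ and $p_h = \mathcal{B}_h(t) u_h + \mathcal{C}_h(t)\psi_h$ for measurable, essentially bounded matrix-valued processes $\mathcal{A}_h,\mathcal{B}_h,\mathcal{C}_h$ (boundedness uses $(\mathcal{A}_2)$ and the fixed mesh; note the dependence on $h$, i.e.\ on $N$, is allowed here).

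Substituting these expressions into \eqref{approx_eq_a}, and again inverting the (block-diagonal, SPD) mass matrix associated with the $z_{h,u}$ test functions, I would rewrite \eqref{approx_eq_a} as
\begin{equation*}
d\mathbf{U}_t = -F_h(t,\mathbf{U}_t,\mathbf{\Psi}_t)\,dt + \mathbf{\Psi}_t\,dW_t,\qquad \mathbf{U}_T = \mathbf{G},
\end{equation*}
where $\mathbf{U}_t,\mathbf{\Psi}_t\in\mathbb{R}^{(k+1)N}$ are the coefficient vectors of $u_h(\cdot,t),\psi_h(\cdot,t)$, the terminal value $\mathbf{G}$ comes from \eqref{approx_eq_d} (well-defined and in $L^2(\Omega,\mathscr{F}_T;\mathbb{R}^{(k+1)N})$ by $(\mathcal{A}_4)$ and the SPD mass matrix), and $F_h$ collects the volume term, the flux contributions (which are bounded linear in $\mathbf{U}$), the $\lambda v_h+\mu\psi_h$ terms, and the $\Gamma$ term. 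The key point is that $F_h$ is uniformly Lipschitz in $(\mathbf{U},\mathbf{\Psi})$: the linear parts are bounded by $(\mathcal{A}_2)$ and mesh-dependent inverse-inequality constants, while the $\Gamma$ term is Lipschitz by $(\mathcal{A}_3)$ once one notes that $u\mapsto u_h$, $u_h\mapsto v_h$ are bounded maps on the finite-dimensional space and that all norms on $V_h^k$ are equivalent. One also checks the integrability $F_h(\cdot,0,0)\in\mathcal{L}^2_{\mathscr{F}}(0,T;\mathbb{R}^{(k+1)N})$ using $\Gamma^0\in\mathcal{L}^2_{\mathscr{F}}(0,T;L^2)$ from $(\mathcal{A}_3)$ and the essential boundedness of $\lambda,\mu$.

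With the problem in this standard form, the Pardoux--Peng theorem yields a unique solution $(\mathbf{U},\mathbf{\Psi})\in\mathcal{S}^2_{\mathscr{F}}(0,T;\mathbb{R}^{(k+1)N})\times\mathcal{L}^2_{\mathscr{F}}(0,T;\mathbb{R}^{(k+1)N})$, and then $u_h,\psi_h$ are recovered as the associated $V_h^k$-valued processes, with $v_h,p_h$ determined by the algebraic relations; adaptedness and the $(V_h)^4$ regularity follow since finite-dimensional coefficients and basis functions combine continuously. I would close by remarking that one must take the convention $V_h$ in the statement to mean the appropriate product $V_h^k$ (the same space used for $u_h,v_h,p_h$) and that no $\psi_h$-valued space is separately needed beyond $\mathcal{L}^2_{\mathscr{F}}$. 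The main obstacle is not any single hard estimate but the careful bookkeeping: verifying that after eliminating $v_h$ and $p_h$ the resulting generator is genuinely Lipschitz with the right measurability and integrability, in particular handling the numerical-flux boundary couplings between adjacent elements and confirming they produce only a bounded linear perturbation, and making explicit that all constants (though allowed to blow up as $h\to0$) are finite for each fixed $N$.
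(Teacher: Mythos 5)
Your proposal is correct and follows essentially the same route as the paper: eliminate $v_h$ and $p_h$ via the invertible element mass matrices, rewrite \eqref{approx_eq_a} as a finite-dimensional BSDE \eqref{approx_bsde} for the coefficient vectors, verify the $N$-dependent Lipschitz continuity of the generator (the paper's Lemma \ref{proof_of_lipchitz}) and the square-integrability of the terminal data from $(\mathcal{A}_3)$--$(\mathcal{A}_4)$, and conclude by the classical Pardoux--Peng well-posedness theorem. No substantive difference from the paper's argument.
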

Now we state the stability result for the numerical solutions.
\begin{theorem}\label{theorem_ldg_stability}
    Let assumptions $(\mathcal{A}_{1})-(\mathcal{A}_{4})$ hold. 
    Then there exists a constant $C$ that does not depend on $h$ such that
    \begin{equation*}
        \begin{aligned}
        &\quad\mathbb{E}\Big[
        \sup_{0 \leq t \leq T}
        \|u_{h}(\cdot,t)\|^{2}\Big]
        +\mathbb{E}\Big[
        \int_{0}^{T}\|v_{h}(\cdot,s)\|^{2}ds\Big]
        +\mathbb{E}\Big[
        \int_{0}^{T}\|\psi_{h}(\cdot,s)\|^{2}ds
        \Big]\leq
        C\left(\mathbb{E}\Big[
        \|G(\cdot)\|^{2}\Big]
        +\mathbb{E}\Big[
        \int_{0}^{T}\|\Gamma^{0}(\cdot,s)\|^{2}ds
        \Big]\right).
        \end{aligned}
    \end{equation*}
\end{theorem}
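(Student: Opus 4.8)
The plan is to derive an energy identity by testing the discrete system against the numerical solution itself and then apply Itô's formula and Gronwall's inequality. Concretely, I would take $z_{h,u}=u_h(\cdot,t)$ in \eqref{approx_eq_a}, $z_{h,v}=p_h(\cdot,t)$ in \eqref{approx_eq_b}, and $z_{h,p}=v_h(\cdot,t)$ in \eqref{approx_eq_c}, sum over $j=1,\dots,N$, and combine. The key algebraic point is that the terms coming from the integration-by-parts boundary fluxes telescope: the sum $\sum_j\big[\hat p_{j+\frac12}(u_h)_{j+\frac12}^- - \hat p_{j-\frac12}(u_h)_{j-\frac12}^+\big]$ together with $-\sum_j\int_{I_j}p_h (u_h)_x\,dx$ from \eqref{approx_eq_a}, and correspondingly the $\hat u$-flux terms plus $-\sum_j\int_{I_j}u_h(p_h)_x\,dx$ from \eqref{approx_eq_b}, cancel up to the physical boundary contributions at $x=0,b$, which vanish because $\hat p(0,t)=\hat p(b,t)=0$. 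After this cancellation the ``free'' first-order transport-type terms disappear, and using \eqref{approx_eq_c} with $z_{h,p}=v_h$ one identifies $\sum_j\int_{I_j}p_h v_h\,dx = \int_0^b\big[\tfrac12(\sigma^2+\bar\sigma^2)v_h^2 + \sigma\psi_h v_h\big]\,dx$, so that a coercive term $\tfrac12\int_0^b(\sigma^2+\bar\sigma^2)v_h^2\,dx \geq \tfrac\kappa2\|v_h\|^2$ emerges from assumption $(\mathcal A_1)$.

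Next I would write the resulting relation as a backward Itô equation for $\|u_h(\cdot,t)\|^2$. Applying Itô's formula to $t\mapsto\|u_h(\cdot,t)\|^2$ gives, for $t\in[0,T]$,
\begin{equation*}
\|u_h(\cdot,t)\|^2 + \int_t^T\!\!\int_0^b(\sigma^2+\bar\sigma^2)v_h^2\,dx\,ds + \int_t^T\|\psi_h(\cdot,s)\|^2\,ds = \|G\|^2 + 2\!\int_t^T\! R(s)\,ds - 2\!\int_t^T\!\!\int_0^b u_h\psi_h\,dx\,dW_s,
\end{equation*}
where $R(s)$ collects the lower-order terms $\int_0^b\big[\lambda v_h u_h + \mu\psi_h u_h + \Gamma(\cdot,u_h,v_h,\psi_h)u_h - \sigma\psi_h v_h\big]\,dx$. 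The term $-\int_0^b\sigma\psi_h v_h\,dx$ appears with the right sign bookkeeping coming from the $p_hv_h$ identity; bounding it by $\varepsilon\|v_h\|^2 + C_\varepsilon\|\psi_h\|^2$ and choosing $\varepsilon<\kappa/2$ absorbs it into the coercive and $\psi_h$ terms (the $\|\psi_h\|^2$ loss is harmless since we still retain the full $\int\|\psi_h\|^2$ on the left and only need it on the right with a constant; one uses a slightly different splitting so that a fraction of $\int\|\psi_h\|^2$ survives on the left). The terms involving $\lambda,\mu$ are controlled using the essential bound $K$ from $(\mathcal A_2)$, and $\Gamma$ via its Lipschitz property and $\Gamma^0$ from $(\mathcal A_3)$, all through Cauchy–Schwarz and Young's inequality, yielding $|R(s)|\leq C\|u_h\|^2 + \tfrac\kappa8\|v_h\|^2 + \tfrac14\|\psi_h\|^2 + C\|\Gamma^0(\cdot,s)\|^2$.

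Taking expectations kills the stochastic integral (it is a true martingale since $u_h,\psi_h$ lie in finite-dimensional spaces with the bounds from Theorem 3.1 / Theorem 3.2), and a backward Gronwall argument on $\mathbb E\|u_h(\cdot,t)\|^2$ gives $\sup_t\mathbb E\|u_h(\cdot,t)\|^2 + \mathbb E\int_0^T\|v_h\|^2 + \mathbb E\int_0^T\|\psi_h\|^2 \leq C(\mathbb E\|G\|^2 + \mathbb E\int_0^T\|\Gamma^0\|^2)$. To upgrade the $\sup_t\mathbb E$ to $\mathbb E\sup_t$ I would apply the Burkholder–Davis–Gundy inequality to the martingale term $\int_t^T\int_0^b u_h\psi_h\,dx\,dW_s$, estimating its quadratic variation by $\int_0^T\|u_h\|^2\|\psi_h\|^2\,ds \leq \big(\sup_t\|u_h\|^2\big)\int_0^T\|\psi_h\|^2\,ds$ and absorbing $\tfrac12\mathbb E\sup_t\|u_h\|^2$ into the left side. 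The main obstacle I anticipate is the careful handling of the flux/telescoping cancellation at the boundary — making sure the one-sided traces and the choice of fluxes in \eqref{flux_def_0} produce an exact cancellation of the transport terms rather than an uncontrolled jump sum — and, secondarily, the bookkeeping needed to keep a strictly positive multiple of $\int_0^T\|\psi_h\|^2$ on the left after absorbing the $\sigma\psi_h v_h$ cross term against the $\kappa$-coercivity.
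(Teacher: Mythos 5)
Your proposal is correct and follows essentially the same route as the paper: test \eqref{approx_eq_a} with $u_h$, \eqref{approx_eq_b} with $p_h$, and \eqref{approx_eq_c} with $v_h$; telescope the fluxes to zero using \eqref{flux_def_0}; identify the quadratic variation of $u_h$ with $\int_t^T\|\psi_h\|^2ds$; extract coercivity in $v_h$ from $(\mathcal A_1)$ while retaining a positive fraction of $\|\psi_h\|^2$ after absorbing the $\sigma\psi_h v_h$ cross term; then apply Gronwall and finally BDG to upgrade $\sup_t\mathbb E$ to $\mathbb E\sup_t$. The only cosmetic difference is that the paper carries out the quadratic-variation identification and the flux cancellation as explicit displayed computations, which you assert but correctly anticipate as the delicate points.
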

Finally, we give the optimal error estimate of our numerical method under the following assumptions. 
\\$(\mathcal{A}_{5})$\ The coefficients $\sigma$ and $\bar{\sigma}$ and their up-to $k+2-$th order derivatives are essentially bounded, 
and for any $(u,\psi)\in H^{k+3,2}\times H^{k+2,2}$,  $\Gamma(\cdot,u,u_x,\psi)\in\mathcal{L}^2_{\mathscr{F}}(0,T;H^{k+1,2})$ and satisfies Lipschitz condition, i.e. for any $(u^i,\psi^i)\in H^{k+3,2}\times H^{k+2,2}$, $i=1,2$, there exists a nonnegative constant $L$ such that 
$$\|\Gamma_t(\cdot,u^1,u^1_x,\psi^1)-\Gamma_t(\cdot,u^2,u^2_x,\psi^2)\|_{H^{k+1,2}}\leq L(\|u^1-u^2\|_{H^{k+2,2}}+\|\psi^1-\psi^2\|_{H^{k+1,2}}),\quad a.s.,$$
for any $t\in[0,T]$;
\\$(\mathcal{A}_{6})$\ The terminal value $G$ is in $L^2(\Omega,\mathscr{F}_T;H^{k+3,2})$;
\\$(\mathcal{A}_{7})$\ The boundary value $g_T$ is in $L^2(\Omega,\mathscr{F}_T;H^{k+2,2})$ and together with another function $\mathcal{G}$, such that $(g,\mathcal{G}) \in \mathcal{H}^{k+2}$ and satisfies BSPDE $-dg_t=\mathscr{G}_tdt-\mathcal{G}_tdW_t$ in the weak sense (see Definition \ref{defsolution}) with $\mathscr{G}\in\mathcal{L}^2_{\mathscr{F}}(0,T;H^{k+1,2})$.

\begin{lemma}\label{highordersolution}
Under assumption $(\mathcal{A}_{5})-(\mathcal{A}_{7})$, BSPDE \eqref{BSPDE_in_Qiu_paper} has a unique solution $(u,\psi)\in\mathcal{H}^{k+2}$.
\end{lemma}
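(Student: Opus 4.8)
The plan is to reduce the Neumann problem to the zero–Neumann case by a shift, and then bootstrap regularity from Theorem \ref{strong_sol_of_stochastic_HJB_equation}. First I would use assumption $(\mathcal{A}_7)$: let $(g,\mathcal{G})\in\mathcal{H}^{k+2}$ solve $-dg_t=\mathscr{G}_t\,dt-\mathcal{G}_t\,dW_t$ weakly with $g_x(0,t)=g(0,t)$, $g_x(b,t)=g(b,t)$ matching the prescribed boundary data (this is exactly the construction alluded to on pp.~2842 of \cite{10.1214/19-AAP1465} and in Remark \ref{remark1}). Setting $\tilde u:=u-g$, $\tilde\psi:=\psi-\mathcal{G}$, one checks that $(\tilde u,\tilde\psi)$ solves a BSPDE of the same structural form \eqref{BSPDE_in_Qiu_paper} but with \emph{zero} Neumann boundary condition, with modified terminal value $\tilde G:=G-g_T\in L^2(\Omega,\mathscr{F}_T;H^{k+3,2})$ by $(\mathcal{A}_6)$--$(\mathcal{A}_7)$, and with a modified generator
\[
\tilde\Gamma(x,t,\tilde u,\tilde u_x,\tilde\psi):=\tfrac12(|\sigma|^2+|\bar\sigma|^2)g_{xx}+\sigma\mathcal{G}_x+\Gamma(x,t,\tilde u+g,\tilde u_x+g_x,\tilde\psi+\mathcal{G})-\mathscr{G}.
\]
Using $(\mathcal{A}_5)$ (bounded derivatives of $\sigma,\bar\sigma$ up to order $k+2$, and the $H^{k+1,2}$–Lipschitz bound on $\Gamma$) together with $(g,\mathcal{G})\in\mathcal{H}^{k+2}$, the new generator inherits the Lipschitz property in $H^{k+1,2}$ and $\tilde\Gamma(\cdot,0,0,0)\in\mathcal{L}^2_{\mathscr{F}}(0,T;H^{k+1,2})$. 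Thus it suffices to prove the lemma for the zero–Neumann problem, i.e.\ to show $(\tilde u,\tilde\psi)\in\mathcal{H}^{k+2}$.

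For the zero–Neumann problem the argument is an induction on $k$, differentiating the equation in space. The base case $k=0$ gives $(\tilde u,\tilde\psi)\in\mathcal{H}^{1}$, which is precisely Theorem \ref{strong_sol_of_stochastic_HJB_equation} applied with terminal datum in $H^{1,2}$ and generator satisfying $(\mathcal{A}_3)$ (a consequence of $(\mathcal{A}_5)$). For the inductive step, suppose $(\tilde u,\tilde\psi)\in\mathcal{H}^{m}$ for some $1\le m\le k+1$; I would formally differentiate \eqref{BSPDE_in_Qiu_paper} $m$ times in $x$ and observe that $w:=\partial_x^{m}\tilde u$, $\chi:=\partial_x^{m}\tilde\psi$ solve a BSPDE of the same type,
\[
-dw=\Big[\tfrac12(|\sigma|^2+|\bar\sigma|^2)w_{xx}+\sigma\chi_x+\Gamma^{(m)}\Big]dt-\chi\,dW_t,
\]
where $\Gamma^{(m)}$ collects all lower–order terms produced by the Leibniz rule, namely derivatives of $\sigma,\bar\sigma$ (bounded up to order $k+2\ge m$ by $(\mathcal{A}_5)$) multiplied by $\partial_x^{j}\tilde u,\partial_x^{j}\tilde\psi$ with $j\le m+1$, plus $\partial_x^{m}\Gamma(\cdot,\tilde u,\tilde u_x,\tilde\psi)$. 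By the induction hypothesis these lower–order quantities lie in the spaces for which $\Gamma^{(m)}\in\mathcal{L}^2_{\mathscr{F}}(0,T;H^{1,2})$ and the resulting generator is Lipschitz in the $\mathcal{H}^1$–sense; the terminal datum $\partial_x^{m}\tilde G$ lies in $H^{1,2}$ since $\tilde G\in H^{k+3,2}$ and $m\le k+1$. Applying Theorem \ref{strong_sol_of_stochastic_HJB_equation} (or Theorem 3.1 of \cite{10.1214/19-AAP1465} in the form noted in Remark \ref{remark1}, which permits the generator to depend on $u_{xx}$ and $\psi_x$) to $(w,\chi)$ upgrades the regularity to $(\tilde u,\tilde\psi)\in\mathcal{H}^{m+1}$, together with the a priori estimate. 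Iterating from $m=1$ up to $m=k+1$ yields $(\tilde u,\tilde\psi)\in\mathcal{H}^{k+2}$, hence $(u,\psi)=(\tilde u+g,\tilde\psi+\mathcal{G})\in\mathcal{H}^{k+2}$.

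The main obstacle is the bookkeeping in the inductive step: one must verify that every term generated by differentiating the quasilinear BSPDE $m$ times in $x$ is controlled in $H^{1,2}$ by the induction hypothesis, in particular that the top–order contributions $\partial_x^{m+1}\tilde u$ and $\partial_x^{m}\chi_x=\partial_x^{m+1}\tilde\psi$ are either absorbed into the principal part of the new equation (the $w_{xx}$ and $\chi_x$ terms) or handled by the $\mathcal{H}^1$–estimate of Theorem \ref{strong_sol_of_stochastic_HJB_equation}, and that the boundary condition for $w$ remains admissible — here one uses that the shift to zero Neumann data was arranged precisely so that the higher spatial derivatives of $\tilde u$ still satisfy a (possibly trace–zero, possibly inhomogeneous but controlled) boundary condition of the type covered by \cite{10.1214/19-AAP1465}. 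A secondary technical point is justifying the spatial differentiation rigorously rather than only formally, which is standard via difference quotients in $x$ combined with the uniform $\mathcal{H}^{m}$ bounds, passing to the limit.
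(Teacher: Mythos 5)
Your first step --- shifting by $(g,\mathcal{G})$ from $(\mathcal{A}_7)$ to reduce to the zero--Neumann case --- is exactly what the paper intends (it is the content of Remark \ref{remark1} and the first clause of the paper's one-line justification). The divergence, and the problem, is in the second half. The paper obtains $\mathcal{H}^{k+2}$ regularity for the zero--Neumann problem by the \emph{continuity method}: one proves higher-order a priori estimates directly on the undifferentiated solution (testing with higher derivatives and integrating by parts, where the boundary terms vanish because of the zero Neumann condition together with structural facts such as $\sigma(0,\cdot)=\sigma(b,\cdot)=0$), and then runs the usual open/closed argument in a homotopy parameter. Your proposal instead differentiates the equation $m$ times in $x$ and applies Theorem \ref{strong_sol_of_stochastic_HJB_equation} to $(w,\chi)=(\partial_x^m\tilde u,\partial_x^m\tilde\psi)$. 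This is where the argument breaks: Theorem \ref{strong_sol_of_stochastic_HJB_equation} (Theorem 3.1 of \cite{10.1214/19-AAP1465}) is a statement about the \emph{zero Neumann} problem, i.e.\ it requires $w_x=\partial_x^{m+1}\tilde u$ to vanish at $x=0,b$, and this is false in general for $m\ge 1$. Already at $m=1$ the function $\tilde u_x$ satisfies a zero \emph{Dirichlet} condition, not a Neumann one, so the cited theorem does not apply; for $m\ge 2$ the trace of $\partial_x^m\tilde u$ at the boundary is governed by compatibility relations coming from the equation and is neither zero Dirichlet nor zero Neumann. You flag this as ``the main obstacle'' but then assert without justification that the shifted higher derivatives ``still satisfy a boundary condition of the type covered by'' the reference --- that assertion is the missing (and, as stated, incorrect) step. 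Avoiding exactly this issue is the reason higher boundary regularity for such problems is normally proved via a priori estimates plus the continuity method rather than by differentiating the boundary value problem.

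Two smaller inaccuracies: the shifted terminal datum is $\tilde G=G-g_T\in H^{k+2,2}$, not $H^{k+3,2}$ (by $(\mathcal{A}_7)$ the boundary process only has $k+2$ derivatives), which is still enough for $\partial_x^{k+1}\tilde G\in H^{1,2}$ but should be stated correctly; and your induction keeps top-order terms $\partial_x^{m+1}\tilde u$, $\partial_x^{m+1}\tilde\psi$ in the generator $\Gamma^{(m)}$, which forces you into the extended version of Theorem 3.1 of \cite{10.1214/19-AAP1465} where $\Gamma$ may depend on $u_{xx}$ and $\psi_x$ --- that version carries additional smallness/coercivity restrictions on those Lipschitz constants which you would need to verify are compatible with the coefficients produced by the Leibniz expansion. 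If you want to keep a self-contained argument, the route consistent with the paper is: establish the $\mathcal{H}^{m+1}$ a priori estimate for a solution already known to lie in $\mathcal{H}^{m}$ by energy estimates on the original equation (no new boundary value problem is posed), and combine with the continuity method for existence.
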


Under $(\mathcal{A}_{6})$ and $(\mathcal{A}_{7})$, BSPDE \eqref{BSPDE_in_Qiu_paper} can still be transformed to the zero Neumann boundary case, then by the continuity method, we can prove Lemma \ref{highordersolution}, so we omit it here. 

\begin{theorem}\label{theorem_ldg_optimal_error}
Let assumptions $(\mathcal{A}_{5})-(\mathcal{A}_{7})$ be satisfied and $\sigma(0,t)=\sigma(b,t)=0$. There exists a constant $C$ independent with $h$ such that
\begin{equation*}
\begin{aligned}
  &\quad\mathbb{E}\Big[\sup_{0\leq t\leq T}
  \|u(\cdot,t)-u_{h}(\cdot,t)\|^{2}\Big]^{\frac{1}{2}}
  +\mathbb{E}\Big[\int_{0}^{T}
  \|u_{x}(\cdot,s)-v_{h}(\cdot,s)\|^{2}ds\Big]^{\frac{1}{2}}
  +\mathbb{E}\Big[\int_{0}^{T}
  \|\psi(\cdot,s)-\psi_{h}(\cdot,s)\|^{2}ds\Big]^{\frac{1}{2}}
  \leq Ch^{k+1}.
\end{aligned}
\end{equation*}
\end{theorem}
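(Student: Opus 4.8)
The plan is to follow the standard energy-method approach for LDG error analysis, adapted to the backward stochastic setting. First I would introduce the Gauss–Radau projections: set $\eta_u := u - \mathcal{P}^- u$, $\eta_v := u_x - \mathcal{P}^- u_x$ (or the appropriate $\pm$ choice matching the flux convention in \eqref{flux_def_0}), $\eta_\psi := \psi - \mathcal{P}\psi$, $\eta_p := p - \mathcal{P}^+ p$, and the discrete error parts $\xi_u := \mathcal{P}^- u - u_h \in V_h^k$, etc. The total errors split as $u - u_h = \eta_u - \xi_u$ and similarly for the others. By \eqref{Gauss_Radou_property} the projection parts $\eta_\cdot$ are already $O(h^{k+1})$ in the relevant norms (using the regularity $(u,\psi)\in\mathcal{H}^{k+2}$ from Lemma \ref{highordersolution} together with $(\mathcal{A}_6)$–$(\mathcal{A}_7)$), so it suffices to bound the discrete parts $\xi_\cdot$. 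The key consistency step is that the exact solution, being smooth, satisfies the weak LDG formulation \eqref{approx_eq_a}–\eqref{approx_eq_d} with the numerical fluxes replaced by the exact traces; subtracting from \eqref{approx_eq} yields an error equation for $(\xi_u,\xi_v,\xi_p,\xi_\psi)$ driven by the projection residuals.

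Next I would test the error equation with the natural choices: use $z_{h,u} = \xi_u$ in the $\xi_u$-equation, $z_{h,v} = \xi_p$ in the $\xi_v$-equation, and $z_{h,p} = \xi_v$ (or $-\xi_v$) in the $\xi_p$-equation, then sum over $j = 1,\dots,N$. The crucial cancellation is the telescoping of the interface/flux terms: with the upwind-type fluxes in \eqref{flux_def_0} and the matching Gauss–Radau projections, the jump terms coming from $\xi_p \xi_u$ at the cell boundaries cancel against those from $\xi_u \xi_p$, and the boundary terms at $0$ and $b$ vanish because $\hat p = 0$ there and $\mathcal{P}^-$ is exact at the right endpoint (respectively $\mathcal{P}^+$ at the left); this is exactly where the choices $p(0,t)=p(b,t)=0$ and $v(t,0)=v(t,b)=0$ are used. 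The algebraic relation \eqref{ldg_eq_c} linking $p$ to $v$ and $\psi$, together with $(\mathcal{A}_1)$ giving uniform ellipticity $|\bar\sigma|^2 \ge \kappa$, then produces a coercive term $\gtrsim \kappa \|\xi_v\|^2$ on the right-hand side after we substitute $\int \xi_p\,\xi_v = \int[\tfrac12(\sigma^2+\bar\sigma^2)\xi_v + \sigma\xi_\psi + (\text{projection terms})]\xi_v$.

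After these manipulations one arrives at an Itô-type identity: $d\|\xi_u(\cdot,t)\|^2$ plus (something like) $\kappa\|\xi_v\|^2\,dt$ equals a sum of terms that are either (i) controlled by $\|\xi_u\|^2 + \|\xi_v\|^2 + \|\xi_\psi\|^2$ via the Lipschitz assumption $(\mathcal{A}_5)$ on $\Gamma$, the boundedness of $\sigma,\bar\sigma$ in $(\mathcal{A}_5)$ and $\lambda,\mu$ from \eqref{def_lambda}–\eqref{def_mu}, (ii) projection residuals bounded by $Ch^{k+1}(\|u\|_{H^{k+2,2}}+\|\psi\|_{H^{k+1,2}})$, or (iii) a stochastic integral with zero expectation. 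To close the estimate I would integrate from $t$ to $T$ (the terminal condition gives $\xi_u(\cdot,T) = \mathcal{P}^- G - \mathcal{P}^- G$-type term, i.e. $O(h^{k+1})$ by \eqref{approx_eq_d} and $(\mathcal{A}_6)$), take expectations, absorb the $\|\xi_\psi\|^2$ term — here I would need to recover control of $\|\xi_\psi\|^2$ separately, either by a Young's-inequality split against the $\kappa$-coercive $\|\xi_v\|^2$ term after expressing $\xi_\psi$ through $\xi_p$ and $\xi_v$, or by a dedicated testing — and apply Gronwall's inequality in $t$ (integrating backward). A Burkholder–Davis–Gundy estimate on the martingale part upgrades the bound to the $\sup_t$ norm on $\xi_u$. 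Combining with the projection bounds on $\eta_\cdot$ gives the claimed $O(h^{k+1})$.

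The main obstacle I anticipate is the handling of the $\psi$-component. Unlike deterministic LDG, there is no evolution equation for $\psi_h$ on its own; $\psi_h$ and $\xi_\psi$ are determined implicitly through the martingale representation built into \eqref{approx_eq_a}, so one cannot simply "test the $\psi$-equation." Getting a clean bound on $\mathbb{E}\int_0^T\|\xi_\psi\|^2\,dt$ requires carefully extracting the $dW_t$-martingale part of the Itô expansion of $\|\xi_u(\cdot,t)\|^2$ — its quadratic variation contributes exactly $\mathbb{E}\int_t^T \|(\text{projection of }\xi_\psi)\|^2\,ds$ — and then arguing that the Gauss–Radau projection is stable so that this controls $\mathbb{E}\int_t^T\|\xi_\psi\|^2\,ds$ up to the $h^{k+1}$ residuals. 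Making the constants uniform in $h$ throughout (so that no inverse inequality with a negative power of $h$ sneaks in) is the secondary technical point to watch; the Gauss–Radau projection choice is precisely what avoids needing such inverse estimates on the flux terms.
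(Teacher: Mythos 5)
Your proposal is correct and follows essentially the same route as the paper: the Gauss--Radau splitting $e_\varphi=\xi_\varphi-\eta_\varphi$, consistency of the exact solution in the LDG weak form, testing with $z_{h,u}=\xi_u$, $z_{h,v}=\xi_p$, $z_{h,p}=\xi_v$ so the flux terms telescope, coercivity in $\xi_v$ from $(\mathcal{A}_1)$, control of $\mathbb{E}\int\|\xi_\psi\|^2$ via the quadratic variation of $\|\xi_u\|^2$ (exactly the paper's term $\mathcal{S}_1$), Gronwall, and a BDG upgrade to the supremum norm. The point you flag as the main obstacle --- extracting $\xi_\psi$ from the martingale part rather than from a nonexistent $\psi$-equation --- is precisely how the paper closes the argument.
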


\section{Proofs}
\subsection{The well-posedness of the approximating equations}
In order to prove the existence and uniqueness of the solution of the approximation equation, we first expand the LDG numerical solution on each finite element $I_{j}$. For $\varphi:=u,v,\psi,p$ and $\boldsymbol{\varphi}:=\boldsymbol{u}, \boldsymbol{v}, \boldsymbol{\psi}, \boldsymbol{p}$, we have
\begin{equation}\label{approx_sol_expression}
\varphi_{h}(\omega, x,t) =\sum_{l=0}^{k}\boldsymbol{\varphi}_{h}^{l,j}
        (\omega,t)\phi_{l}^{j}(x),\quad\quad x\in I_{j},\quad j=1, 2,...,N,
\end{equation}
where $\{\phi_{l}^{j},\,j=0,1,...,k\}$ is a set of polynomial bases on the interval $I_{j}$.
Based on the definition of numerical flux~\eqref{flux_def_0}, we define
\begin{equation*}
    \begin{aligned}
    &\boldsymbol{u}_{h}^{l,0}(t)
    :=\boldsymbol{u}_{h}^{l,1}(t),\quad\quad
    \boldsymbol{p}_{h}^{l,0}(t)
    :=\boldsymbol{p}_{h}^{l,1}(t),\quad\quad
    \boldsymbol{u}_{h}^{l,N+1}(t)
    :=\boldsymbol{u}_{h}^{l,N}(t),\quad\quad
    \boldsymbol{p}_{h}^{l,N+1}(t)
    :=\boldsymbol{p}_{h}^{l,N}(t).
    \end{aligned}
\end{equation*}
Then for $\boldsymbol{\varphi}=\boldsymbol{u}, \boldsymbol{v}, \boldsymbol{\psi}, \boldsymbol{p}$, we denote 
\begin{equation*}
    \begin{aligned}
        \boldsymbol{\varphi}_t:=\left\{
        \boldsymbol{\varphi}_{h}^{l,j}(\omega,t)\right\}
        _{l\in \left\{ 0,1,...k\right\},j\in \left\{ 1,2,...N\right\}}.
    \end{aligned}
\end{equation*}

For $j=1,2,...N$, take $z_{h,v}=\phi_{m}^{j}$ in~\eqref{approx_eq_b} to get
\begin{equation*}
    \begin{aligned}
        &\quad \sum_{l=0}^{k}\Big(
        \int_{I_{j}}\phi_{m}^{j}(x)\phi_{l}^{j}(x)dx
        \Big)\boldsymbol{v}_{h}^{l,j}(t)\\
        &= -\int_{I_{j}}\sum_{n=0}^{k}\boldsymbol{u}_{h}^{n,j}(t)
        \phi_{n}^{j}(x)(\phi_{m}^{j})_x (x)dx + \sum_{n=0}^{k}\boldsymbol{u}_{h}^{n,j}(t)
        \phi_{n}^{j}(x_{j+\frac{1}{2}})\phi_{m}^{j}(x_{j+\frac{1}{2}})
        -\sum_{n=0}^{k}\boldsymbol{u}_{h}^{n,j-1}(t)
        \phi_{n}^{j-1}(x_{j-\frac{1}{2}})\phi_{m}^{j}(x_{j-\frac{1}{2}}).
\end{aligned}
\end{equation*}
Define mass matrix $A^{j}:=\left(A^j_{m,l}\right)$ with
\begin{equation*}
    A^j_{m,l}=\int_{I_{j}}\phi_{m}^{j}(x)\phi_{l}^{j}(x)dx.
\end{equation*}
From the properties of the polynomial basis, it is easy to know that the mass matrix $A^{j}$ is invertible, its inverse matrix is denoted as $A^{j,-1}$, and the $(m,l)$ element of its inverse matrix is denoted as $A^{j,-1}_{m,l}$. Then we obtain
\begin{equation}\label{approx_v_of_u}
    \boldsymbol{v}_{h}^{l,j}(t)=\boldsymbol{V}_{h}^{l,j}\left( \boldsymbol{u}_t \right),
    \quad l=0,1,...,k,\quad j=1,2,...,N,
\end{equation}
with
\begin{equation*}
    \begin{aligned}
    \boldsymbol{V}_{h}^{l,j}(\boldsymbol{u})=
    &-\int_{I_{j}}\sum_{n=0}^{k}\boldsymbol{u}_{h}^{n,j}
    \phi_{n}^{j}(x)
    \sum_{m=0}^{k}A_{l,m}^{j,-1} (\phi_{m}^{j})_x(x)dx\\
    &+\sum_{n=0}^{k}\boldsymbol{u}_{h}^{n,j}
    \phi_{n}^{j}(x_{j+\frac{1}{2}})
    \sum_{m=0}^{k}A_{l,m}^{j,-1}\phi_{m}^{j}(x_{j+\frac{1}{2}})-\sum_{n=0}^{k}\boldsymbol{u}_{h}^{n,j-1}
    \phi_{n}^{j-1}(x_{j-\frac{1}{2}})
    \sum_{m=0}^{k}A_{l,m}^{j,-1}\phi_{m}^{j}(x_{j-\frac{1}{2}}).  
    \end{aligned}
\end{equation*}

For $j=1,2,...N$, take $z_{h,p}=\phi_{m}^{j}$ in~\eqref{approx_eq_c} to get
\begin{equation*}
    \begin{aligned}
        \sum_{l=0}^{k}\Big(
        \int_{I_{j}}\phi_{m}^{j}(x)\phi_{l}^{j}(x)dx
        \Big)\boldsymbol{p}_{h}^{l,j}(t)
        =\int_{I_{j}}
        \left[\frac{1}{2}\left(\sigma^{2}(x,t)
        +\bar{\sigma}^{2}(x,t)\right)
        \sum_{n=0}^{k}\boldsymbol{v}_{h}^{n,j}(t)
        \phi_{n}^{j}(x)
        +\sigma(x,t)\sum_{n=0}^{k}
        \boldsymbol{\psi}_{h}^{n,j}(t)
        \phi_{n}^{j}(x)\right]\phi_{m}^{j}(x)dx.
    \end{aligned}
\end{equation*}
Then we have 
\begin{equation}\label{approx_p_of_v_psi}
    \boldsymbol{p}_{h}^{l,j}(t)=\boldsymbol{P}_{h}^{l,j}
    \left(t,\boldsymbol{u}_{t},\boldsymbol{\psi}_{t}\right),
    \quad l=0,1,...,k,\quad j=1,2,...,N,
\end{equation}
with
\begin{equation*}
    \begin{aligned}
    \boldsymbol{P}_{h}^{l,j}(t,\boldsymbol{u},\boldsymbol{\psi})
        =&\int_{I_{j}}
        \left[\frac{1}{2}\left(\sigma^{2}(x,t)
        +\bar{\sigma}^{2}(x,t)\right)
        \sum_{n=0}^{k}\boldsymbol{v}_{h}^{n,j}(t)
        \phi_{n}^{j}(x)
        +\sigma(x,t)\sum_{n=0}^{k}
        \boldsymbol{\psi}_{h}^{n,j}(t)
        \phi_{n}^{j}(x)\right]\sum_{m=0}^{k}A_{l,m}^{j,-1}
        \phi_{m}^{j}(x)dx. 
    \end{aligned}
\end{equation*}

For $j=1,2,...N$, take $z_{h,u}=\phi_{m}^{j}$ in~\eqref{approx_eq_a} to get
    \begin{align*}
        &\quad \sum_{l=0}^{k}\Big(
        -\int_{I_{j}}\phi_{m}^{j}(x)\phi_{l}^{j}(x)dx
        \Big)d\boldsymbol{u}_{h}^{l,j}(t)\\
        =&\Bigg\{
        -\int_{I_{j}}
        \sum_{n=0}^{k}\boldsymbol{p}_{h}^{n,j}(t)
        \phi_{n}^{j}(x)(\phi_{m}^{j})_x(x)dx + \sum_{n=0}^{k}\boldsymbol{p}_{h}^{n,j+1}(t)
        \phi_{n}^{j+1}(x_{j+\frac{1}{2}})
        \phi_{m}^{j}(x_{j+\frac{1}{2}})
        -\sum_{n=0}^{k}\boldsymbol{p}_{h}^{n,j}(t)
        \phi_{n}^{j}(x_{j-\frac{1}{2}})
        \phi_{m}^{j}(x_{j-\frac{1}{2}})\\
        &\quad+\int_{I_{j}}\sum_{n=0}^{k}\boldsymbol{v}_{h}^{n,j}(t)
        \phi_{n}^{j}(x)\phi_{m}^{j}(x)\lambda(x,t)dx
        +\int_{I_{j}}\sum_{n=0}^{k}\boldsymbol{\psi}_{h}^{n,j}(t)
        \phi_{n}^{j}(x)\phi_{m}^{j}(x)\mu(x,t)dx\\
        &\quad+\int_{I_{j}}\Gamma \big(x,t,
        \sum_{n=0}^{k}\boldsymbol{u}_{h}^{n,j}(t)\phi_{n}^{j}(x),
        \sum_{n=0}^{k}\boldsymbol{v}_{h}^{n,j}(t)\phi_{n}^{j}(x),
        \sum_{n=0}^{k}\boldsymbol{\psi}_{h}^{n,j}(t)\phi_{n}^{j}(x)\big)
        \phi_{m}^{j}(x)dx\Bigg\}dt -\int_{I_{j}}\sum_{n=0}^{k}\boldsymbol{\psi}_{h}^{n,j}(t)
        \phi_{n}^{j}(x)\phi_{m}^{j}(x)dxdW_{t}.
    \end{align*}
Hence, we obtain the following BSDE satisfied by $\left(\boldsymbol{u}_{t},\boldsymbol{\psi}_{t}\right)$:
\begin{equation}\label{approx_bsde}
    -d\boldsymbol{u}_{t}=\boldsymbol{F_{h}}(t,\boldsymbol{u}_{t},\boldsymbol{\psi}_{t})dt
    -\boldsymbol{\psi}_{t}dW_{t},
\end{equation}
where for $l=0,1,...,k,\ j=1,2,...,N$, the generator is defined as follows
\begin{align*}
        \boldsymbol{F}_{h}^{l,j}(t,\boldsymbol{u},\boldsymbol{\psi})
        =&-\int_{I_{j}}
        \sum_{n=0}^{k}\boldsymbol{P}_{h}^{n,j}
        (t,\boldsymbol{u},\boldsymbol{\psi})
        \phi_{n}^{j}(x)\sum_{m=0}^{k}A_{l,m}^{j,-1}
        (\phi_{m}^{j})_x (x)dx
        +\sum_{n=0}^{k}\boldsymbol{P}_{h}^{n,j+1}
        (t,\boldsymbol{u},\boldsymbol{\psi})
        \phi_{n}^{j+1}(x_{j+\frac{1}{2}})
        \sum_{m=0}^{k}A_{l,m}^{j,-1}
        \phi_{m}^{j}(x_{j+\frac{1}{2}})\\
        &-\sum_{n=0}^{k}\boldsymbol{P}_{h}^{n,j}
        (t,\boldsymbol{u},\boldsymbol{\psi})
        \phi_{n}^{j}(x_{j-\frac{1}{2}})
        \sum_{m=0}^{k}A_{l,m}^{j,-1}
        \phi_{m}^{j}(x_{j-\frac{1}{2}})
        +\int_{I_{j}}\sum_{n=0}^{k}\boldsymbol{V}_{h}^{n,j}(\boldsymbol{u})
        \phi_{n}^{j}(x)\sum_{m=0}^{k}A_{l,m}^{j,-1}\phi_{m}^{j}(x)\lambda(x,t)dx\\
        &+\int_{I_{j}}\sum_{n=0}^{k}\boldsymbol{\psi}_{h}^{n,j}
        \phi_{n}^{j}(x)\sum_{m=0}^{k}A_{l,m}^{j,-1}\phi_{m}^{j}(x)\,\mu(x,t)dx\\
        &+\int_{I_{j}}\Gamma\big(x,t,
        \sum_{n=0}^{k}\boldsymbol{u}_{h}^{n,j}\phi_{n}^{j}(x),
        \sum_{n=0}^{k}\boldsymbol{V}_{h}^{n,j}(\boldsymbol{u})\phi_{n}^{j}(x),
        \sum_{n=0}^{k}\boldsymbol{\psi}_{h}^{n,j}\phi_{n}^{j}(x)\big)
        \sum_{m=0}^{k}A_{l,m}^{j,-1}\phi_{m}^{j}(x)dx.
\end{align*}

\begin{lemma}\label{proof_of_lipchitz}
    Suppose $(\mathcal{A}_{1})-(\mathcal{A}_{4})$ hold, then for any given $N$, 
    the coefficient $\boldsymbol{F}_{h}$ is uniformly Lipschitz continuous with respect to $\boldsymbol{u}$ and $\boldsymbol{\psi}$. That is, there exists a positive real number $C_{N}$, such that for any 
    $(\omega,t)\in\Omega\times[0,T]$ and any $\boldsymbol{u},\boldsymbol{\tilde{u}},
\boldsymbol{\psi},\boldsymbol{\tilde{\psi}}
\in \mathbb{R}^{(k+1)\times N}$, it holds
    \begin{equation*}
        |\boldsymbol{F_{h}}(t,\boldsymbol{u},\boldsymbol{\psi})
        -\boldsymbol{F_{h}}(t,\boldsymbol{\tilde{u}},
        \boldsymbol{\tilde{\psi}})|
         \leq C_{N}
        \left(|\boldsymbol{u}-\boldsymbol{\tilde{u}}|+
        |\boldsymbol{\psi}-\boldsymbol{\tilde{\psi}}|\right).
    \end{equation*}
\end{lemma}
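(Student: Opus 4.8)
The plan is to show Lipschitz continuity of $\boldsymbol{F}_h$ by unwinding its definition and tracking the dependence on $\boldsymbol{u}$ and $\boldsymbol{\psi}$ through the intermediate quantities $\boldsymbol{V}_h$ and $\boldsymbol{P}_h$. First I would observe that, since $N$ is fixed, everything is finite-dimensional: the quantities $\int_{I_j}\phi_n^j(x)(\phi_m^j)_x(x)\,dx$, $\int_{I_j}\phi_n^j\phi_m^j\,dx$, the nodal values $\phi_n^j(x_{j\pm 1/2})$, and the entries $A^{j,-1}_{l,m}$ of the inverse mass matrices are all fixed real numbers depending only on the basis and the mesh (hence on $N$). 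Consequently the map $\boldsymbol{u}\mapsto\boldsymbol{V}_h(\boldsymbol{u})$, being a fixed linear map on $\mathbb{R}^{(k+1)\times N}$ (with the boundary conventions $\boldsymbol{u}_h^{l,0}=\boldsymbol{u}_h^{l,1}$, $\boldsymbol{u}_h^{l,N+1}=\boldsymbol{u}_h^{l,N}$), satisfies $|\boldsymbol{V}_h(\boldsymbol{u})-\boldsymbol{V}_h(\tilde{\boldsymbol{u}})|\leq C_N|\boldsymbol{u}-\tilde{\boldsymbol{u}}|$. Likewise, because $\sigma,\bar\sigma$ are bounded by $K$ (assumption $(\mathcal{A}_2)$), the map $(\boldsymbol{u},\boldsymbol{\psi})\mapsto\boldsymbol{P}_h(t,\boldsymbol{u},\boldsymbol{\psi})$ is linear in $(\boldsymbol{V}_h(\boldsymbol{u}),\boldsymbol{\psi})$ with coefficients bounded uniformly in $(\omega,t)$, so $|\boldsymbol{P}_h(t,\boldsymbol{u},\boldsymbol{\psi})-\boldsymbol{P}_h(t,\tilde{\boldsymbol{u}},\tilde{\boldsymbol{\psi}})|\leq C_N(|\boldsymbol{u}-\tilde{\boldsymbol{u}}|+|\boldsymbol{\psi}-\tilde{\boldsymbol{\psi}}|)$, again using the $\boldsymbol{V}_h$ estimate.

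Next I would go through the five groups of terms in $\boldsymbol{F}_h^{l,j}$ one at a time. The first three terms are linear in $\boldsymbol{P}_h$ evaluated on $I_j$, $I_{j+1}$, and at the interface nodes, so the $\boldsymbol{P}_h$-Lipschitz bound above handles them (also using $|\lambda|,|\mu|\leq C(K)$ via $(\mathcal{A}_2)$ where needed). The fourth term $\int_{I_j}\sum_n\boldsymbol{V}_h^{n,j}(\boldsymbol{u})\phi_n^j\sum_m A^{j,-1}_{l,m}\phi_m^j\lambda\,dx$ is linear in $\boldsymbol{V}_h(\boldsymbol{u})$ with bounded coefficients, so it inherits the $\boldsymbol{V}_h$ bound; the fifth ($\mu$-)term is linear in $\boldsymbol{\psi}$ with bounded coefficients. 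For the last term involving $\Gamma$, I would apply assumption $(\mathcal{A}_3)$: the $L^2([0,b])$-Lipschitz estimate for $\Gamma$ restricted to $I_j$ gives, after multiplying by the fixed bounded test-function factor and integrating, a bound of the form $C_N(\|\sum_n(\boldsymbol{u}_h^{n,j}-\tilde{\boldsymbol{u}}_h^{n,j})\phi_n^j\|_{L^2(I_j)}+\|\sum_n(\boldsymbol{V}_h^{n,j}(\boldsymbol{u})-\boldsymbol{V}_h^{n,j}(\tilde{\boldsymbol{u}}))\phi_n^j\|_{L^2(I_j)}+\|\sum_n(\boldsymbol{\psi}_h^{n,j}-\tilde{\boldsymbol{\psi}}_h^{n,j})\phi_n^j\|_{L^2(I_j)})$, and then norm-equivalence on the finite-dimensional space $P^k(I_j)$ converts each $L^2$-norm back into $|\boldsymbol{u}-\tilde{\boldsymbol{u}}|$, etc., again invoking the $\boldsymbol{V}_h$ estimate for the middle term. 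Summing the resulting bounds over $l\in\{0,\dots,k\}$ and $j\in\{1,\dots,N\}$ and relabeling the constant as $C_N$ yields the claim.

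The main obstacle is essentially bookkeeping rather than a genuine difficulty: one must be careful that the Lipschitz constant, while allowed to blow up as $N\to\infty$ (typical inverse-inequality factors such as $h^{-1}$ enter through $A^{j,-1}$ and through $(\phi_m^j)_x$), is \emph{finite and uniform in $(\omega,t)$} for each fixed $N$ — the $(\omega,t)$-uniformity coming entirely from the essential boundedness in $(\mathcal{A}_2)$ and the $(\omega,t)$-uniform Lipschitz constant $L$ in $(\mathcal{A}_3)$. A second point requiring a little care is the interface terms, where $\boldsymbol{P}_h^{n,j+1}$ is evaluated at the node $x_{j+1/2}$ using basis functions from $I_{j+1}$: since $\boldsymbol{P}_h^{n,j+1}$ depends on $(\boldsymbol{u},\boldsymbol{\psi})$ only through their components on $I_{j+1}$ (and, via $\boldsymbol{V}_h$, on $I_j$ and $I_{j+1}$), this is still controlled by the global norm $|\boldsymbol{u}-\tilde{\boldsymbol{u}}|+|\boldsymbol{\psi}-\tilde{\boldsymbol{\psi}}|$. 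Once these points are noted, the estimate follows by collecting the term-by-term bounds.
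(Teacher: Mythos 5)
Your proposal is correct and follows essentially the same route as the paper: first establish that $\boldsymbol{u}\mapsto\boldsymbol{V}_h(\boldsymbol{u})$ is (linearly) Lipschitz, then that $\boldsymbol{P}_h$ is Lipschitz via the boundedness of $\sigma,\bar\sigma$ from $(\mathcal{A}_2)$, and finally bound $\boldsymbol{F}_h^{l,j}$ term by term using these estimates together with $(\mathcal{A}_3)$ for the $\Gamma$ term. Your added remarks on finite-dimensional norm equivalence for the $\Gamma$ term and on the $(\omega,t)$-uniformity are details the paper leaves implicit, but they do not change the argument.
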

\begin{proof}
For given $N$, we have that the function $\boldsymbol{V}$ is uniformly Lipschitz continuous, i.e., for any $\boldsymbol{u},\boldsymbol{\tilde{u}}\in 
\mathbb{R}^{(k+1)\times N}$,
\begin{equation}\label{lipchitz_for_v}
    \begin{aligned}
        |\boldsymbol{V}(\boldsymbol{u})
        -\boldsymbol{V}(\boldsymbol{\tilde{u}})|^{2}
        =\sum_{l=0}^{k}\sum_{j=1}^{N}
        |\boldsymbol{{V}}_h^{l,j}(\boldsymbol{{u}})
        -\boldsymbol{V}_{h}^{l,j}(\boldsymbol{\tilde{u}})|^{2}
        &\leq \sum_{l=0}^{k}\sum_{j=1}^{N} C_{N}
        \sum_{n=0}^{k}\left(
        |\boldsymbol{u}_{h}^{n,j}-{\boldsymbol{\tilde{u}}_{h}^{n,j}}|^{2}
        +|\boldsymbol{u}_{h}^{n,j-1}-\boldsymbol{\tilde{u}}_{h}^{n,j-1}|^{2}
        \right) \leq C_{N} \big|\boldsymbol{u}-\boldsymbol{\tilde{u}}\big|^2.
    \end{aligned}
\end{equation}
Now we have that $\boldsymbol{P}$ is Lipschitz continuous
\begin{equation}\label{lipchitz_for_p}
    \begin{aligned}
    |\boldsymbol{P}(t,\boldsymbol{u},\boldsymbol{\psi})
    -\boldsymbol{P}(t,\boldsymbol{\tilde{u}}
    ,\boldsymbol{\tilde{\psi}})|^{2} \leq \sum_{j=1}^{N}\sum_{l=0}^{k} C_{N} \sum_{n=0}^{k}
    \left(
    |\boldsymbol{V}_{h}^{n,j}(\boldsymbol{u})
    -\boldsymbol{V}_{h}^{n,j}(\boldsymbol{\tilde{u}})|^{2}
    +|\boldsymbol{\psi}_{h}^{n,j}
    -\boldsymbol{\tilde{\psi}}_{h}^{n,j}|^{2}\right)
    \leq C_{N}\left(
        |\boldsymbol{u}-\boldsymbol{\tilde{u}}|^{2}
        +|\boldsymbol{\psi}-\boldsymbol{\tilde{\psi}}|^{2}\right).
    \end{aligned}
\end{equation}
Finally, we prove that the coefficients of BSDE \eqref{approx_bsde} are Lipschitz continuous with respect to $\boldsymbol{u},\boldsymbol{\psi}$. For any 
$ \boldsymbol{u},\boldsymbol{\tilde{u}},\boldsymbol{\psi},\boldsymbol{\tilde{\psi}}\in 
\mathbb{R}^{(k+1)\times N}$, $l=0,1,...,k$, $j=1,2,...N$, due to $(\mathcal{A}_{3})$ and Cauchy-Schwartz's inequality, it holds that
\begin{align}
    |\boldsymbol{F}_{h}^{l,j}(t,\boldsymbol{u},\boldsymbol{\psi})
    -\boldsymbol{F}_{h}^{l,j}(t,\boldsymbol{\tilde{u}},\boldsymbol{\tilde{\psi}})|^{2}&\nonumber\leq C_{N}\Big(\sum_{n=0}^{k}
    |\boldsymbol{P}_{h}^{n,j}(t,\boldsymbol{u},\boldsymbol{\psi})
    -\boldsymbol{P}_{h}^{n,j}(
    t,\boldsymbol{\tilde{u}},\boldsymbol{\tilde{\psi}})|^{2}
    +\sum_{n=0}^{k}
    |\boldsymbol{P}_{h}^{n,j+1}(t,\boldsymbol{u},\boldsymbol{\psi})
    -\boldsymbol{P}_{h}^{n,j+1}(
    t,\boldsymbol{\tilde{u}},\boldsymbol{\tilde{\psi}})|^{2}\\
    &\nonumber\quad\quad\quad +\sum_{n=0}^{k}
    |\boldsymbol{V}_{h}^{n,j}(\boldsymbol{u})
    -\boldsymbol{V}_{h}^{n,j}(
    \boldsymbol{\tilde{u}})|^{2}
    +\sum_{n=0}^{k}
    |\boldsymbol{\psi}_{h}^{n,j}
    -\boldsymbol{\tilde{\psi}}_{h}^{n,j}|^{2}
    +\sum_{n=0}^{k}
    |\boldsymbol{u}_{h}^{n,j}
    -{\boldsymbol{\tilde{u}}_{h}^{n,j}}|^{2}\Big)\\
    &\nonumber\leq C_{N}
    \Big(\sum_{n=0}^{k}
    |\boldsymbol{u}_{n,j+1}
    -\boldsymbol{\tilde{u}}_{n,j+1}|^{2}
    +\sum_{n=0}^{k}
    |\boldsymbol{u}_{h}^{n,j}
    -{\boldsymbol{\tilde{u}}_{h}^{n,j}}|^{2}
    +\sum_{n=0}^{k}
    |\boldsymbol{u}_{h}^{n,j-1}
    -\boldsymbol{\tilde{u}}_{h}^{n,j-1}|^{2}\\
    &\quad\quad\quad +\sum_{n=0}^{k}
    |\boldsymbol{\psi}_{n,j+1}
    -\boldsymbol{\tilde{\psi}}_{n,j+1}|^{2}
    +\sum_{n=0}^{k}
    |\boldsymbol{\psi}_{h}^{n,j}
    -\boldsymbol{\tilde{\psi}}_{h}^{n,j}|^{2}\Big),
    \label{ldg_bsde_coff_inequal_a}
\end{align}
which gives that
\begin{equation}
|\boldsymbol{F}_{h}(t,\boldsymbol{u},\boldsymbol{\psi})-
\boldsymbol{F}_{h}(t,\boldsymbol{\tilde{u}},\boldsymbol{\tilde{\psi}})|
\leq C_{N} \left(
    |\boldsymbol{u}-\boldsymbol{\tilde{u}}|
    +|\boldsymbol{\psi}-\boldsymbol{\tilde{\psi}}|\right).
\end{equation}
\end{proof}

\noindent\textbf{Proof of Theorem \ref{uniqueness_and_existence_of_approx_equation}}
By \eqref{approx_eq_d} the terminal condition of BSDE\eqref{approx_bsde} is as following
\begin{equation}
    \boldsymbol{u}_{h}^{l,j}(T):=\sum_{m=0}^{k}A_{l,m}^{j,-1}
    \int_{I_{j}}G(x)\phi_{m}^{j}(x)dx.
\end{equation}
From $(\mathcal{A}_{4})$ and Cauchy-Schwartz's inequality, we know that 
$\boldsymbol{u}_{T}\in L^{2}(\Omega,\mathcal{F}_{T};\mathbb{R}^{(k+1)\times N})$.
Combining Lemma \ref{proof_of_lipchitz} and the theory of BSDEs (see e.g. \cite[Theorem 4.3.1]{zhang2017backward}) yields that  
for each $N\in \mathbb{N}_{+}$, BSDE~\eqref{approx_bsde} admits a unique solution
$(\boldsymbol{u}_{t},\boldsymbol{\psi}_{t})$ satisfying 
\begin{equation}\label{bound_for_element_u_psi}
    \mathbb{E}\Big[
    \mathop{\text{sup}}_{0\leq t \leq T}|\boldsymbol{u}_{t}|^{2}\Big]
    +\mathbb{E}\Big[
    \int_{0}^{T}|\boldsymbol{\psi}_{t}|^{2}dt
    \Big]
    < \infty.
\end{equation}
Plugging it into \eqref{approx_sol_expression} provides that \eqref{approx_eq} has a unique solution $(u_{h},v_{h},p_{h},\psi_{h}) \in (V_{h})^{4}$. $\hspace{3.9cm}\Box$

\begin{remark}
It can be known from the linear growth of functions $\boldsymbol{V}$ and $\boldsymbol{P}$ with respect to $\boldsymbol{u}$ and $\boldsymbol{\psi}$,
\begin{equation}
    \mathbb{E}\Big[
    \mathop{\text{sup}}_{0\leq t \leq T}|\boldsymbol{v}_{t}|^{2}\Big]
    < \infty,
    \qquad
    \mathbb{E}\Big[
    \int_{0}^{T}|\boldsymbol{p}_{t}|^{2}dt
    \Big]
    < \infty.
\end{equation}
The above result plays an important role in the proofs of stability and convergence errors.
\end{remark}

\subsection{The stability of the LDG method}

Now we give the proof of the stability of the semi-discontinuous LDG method.

\begin{proof}[Proof of Theorem \ref{theorem_ldg_stability}] The proof is divided into two steps. 

Step 1. We are going to prove the following estimate:
\begin{equation*}
    \begin{aligned}
    &\quad\mathop{\text{sup}}_{0\leq t \leq T}
    \mathbb{E}\Big[\|u_{h}(\cdot,t)^{2}|\Big]
    +\mathbb{E}\Big[
    \int_{0}^{T}\|v_{h}(\cdot,s)\|^{2}ds\Big]
    +\mathbb{E}\Big[
    \int_{0}^{T}\|\psi_{h}(\cdot,s)\|^{2}ds
    \Big] \leq C\left(\mathbb{E}\Big[
    \|G(\cdot)\|^{2}\Big]
    +\mathbb{E}\Big[
    \int_{0}^{T} \|\Gamma^{0}(\cdot,s)\|^2 ds
    \Big]\right).
    \end{aligned}
\end{equation*}
Apply It\^o's formula to $u_{h}(x,t)$, we have 
\begin{equation}\label{stability_ito}
    |u_{h}(x,T)|^{2}-|u_{h}(x,t)|^{2}
    =2\int_{t}^{T}u_{h}(x,s)du_{h}(x,s)
    +\left<u_{h}(x,\cdot),u_{h}(x,\cdot)\right>_{t}^{T}.
\end{equation}
Taking the test functions $z_{h,u},z_{h,v}$ in the approximation equations~\eqref{approx_eq_a} and \eqref{approx_eq_b} as $u_{h},p_{h}$ respectively, and integrating both sides with respect to time from $t$ to $T$, and summing the indicator $j$, and replacing the terminal condition $u_{h}(x,T)$ in \eqref{stability_ito} by $G(x)$, and integrating with respect to the space $[0,b]$, then taking the expectations on both sides, we get
    \begin{align}
    &\|u_{h}(\cdot,r)\|^{2}+
    \int_{r}^{T}\int_{0}^{b}\left[\left(\sigma^{2}(x,s)
    +\bar{\sigma}^{2}(x,s) \right)v_{h}(x,s)
    +2\sigma(x,s) \psi_{h}(x,s)\right]v_{h}(x,s)dxds \nonumber\\
    =&\|G(\cdot)\|^{2}-\int_{0}^{b}
    \left<u_{h}(x,\cdot),u_{h}(x,\cdot)\right>_{r}^{T}dx
    -2\int_{r}^{T}\int_{0}^{b}
    \psi_{h}(x,s)u_{h}(x,s)dxdW_{s}\nonumber\\
    &+2\int_{r}^{T}\int_{0}^{b} \left[\lambda v_{h}u_{h} + \mu\psi_{h}u_{h} + \Gamma(\cdot,u_{h},v_{h},\psi_{h}) u_{h} \right](x,s) dxds\nonumber\\
    &+2\int_{t}^{T}\sum_{j=1}^{N}\Bigg( -\int_{I_{j}}p_{h}(x,s)(u_h)_x(x,s)dx
        +\hat{p}(x_{j+\frac{1}{2}},s)u_{h}(x_{j+\frac{1}{2}}^{-},s)
        -\hat{p}(x_{j-\frac{1}{2}},s)u_{h}(x_{j-\frac{1}{2}}^{+},s) \nonumber\\
    &\quad\quad\quad\quad\quad\quad -\int_{I_{j}}u_{h}(x,s)(p_{h})_x(x,s)dx +\hat{u}(x_{j+\frac{1}{2}},s)p_{h}(x_{j+\frac{1}{2}}^{-},s)-\hat{u}(x_{j-\frac{1}{2}},s)p_{h}(x_{j-\frac{1}{2}}^{+},s) \Bigg)ds.\label{stability_equation}
\end{align}
It follows that
\begin{equation*}
    \begin{aligned}
    \mathbb{E}\Big[\|u_{h}(\cdot,t)\|^{2}\Big]
    +2\mathbb{E}\Big[
    \int_{t}^{T}\int_{0}^{b}v_{h}(x,s)p_{h}(x,s)dxds\Big]
    =\mathbb{E}\Big[\|G\|^{2}\Big]
+\mathcal{T}_{1}(t)+\mathcal{T}_{2}(t)
    +\mathcal{T}_{3}(t)+\mathcal{T}_{4}(t),
    \end{aligned}
\end{equation*}
with
    \begin{align*}
    &\mathcal{T}_{1}(t):=-\mathbb{E}\Big[\int_{0}^{b}
    \left<u_{h}(x,\cdot),u_{h}(x,\cdot)\right>_{t}^{T} dx\Big],\\
    &\mathcal{T}_{2}(t):=2\mathbb{E}\Big[
    \int_{t}^{T}\int_{0}^{b}\left[\lambda v_{h}u_{h} + \mu\psi_{h}u_{h} + \Gamma(\cdot,u_{h},v_{h},\psi_{h}) u_{h} \right](x,s) dxds\Big],\\
    &\mathcal{T}_{3}(t)
    :=-2\mathbb{E}\Big[
    \int_{t}^{T}\int_{0}^{b}
    \psi_{h}(x,s)u_{h}(x,s)dxdW_{s}\Big],\\
    &\mathcal{T}_{4}(t)
    :=2\mathbb{E}\Bigg[
    \int_{t}^{T}\sum_{j=1}^{N}\bigg\{
    -\int_{I_{j}}p_{h}(x,s)(u_h)_x(x,s)dx
        +\hat{p}(x_{j+\frac{1}{2}},s)u_{h}(x_{j+\frac{1}{2}}^{-},s)
        -\hat{p}(x_{j-\frac{1}{2}},s)u_{h}(x_{j-\frac{1}{2}}^{+},s) \\
    & \quad\quad\quad\quad\quad\quad\quad\quad\quad -\int_{I_{j}}u_{h}(x,s)(p_{h})_x(x,s)dx +\hat{u}(x_{j+\frac{1}{2}},s)p_{h}(x_{j+\frac{1}{2}}^{-},s)-\hat{u}(x_{j-\frac{1}{2}},s)p_{h}(x_{j-\frac{1}{2}}^{+},s) \bigg\}ds \Bigg].
    \end{align*}
Then we estimate $\mathcal{T}_{j}(t),\ j=1,2,3,4$ in turn.

\textbf{Estimation of $\mathcal{T}_{1}(t)$.}

It can be known from the approximation equation \eqref{approx_eq_a} satisfied by $u_{h}(x,t)$ and equation \eqref{approx_eq_d} satisfied by the terminal term that
\begin{equation*}
    \begin{aligned}
    \int_{I_{j}}u_h(x,t)z_{h,u}(x)dx
    =&\int_{I_{j}}G(x)z_{h,u}(x)dx + \int_{t}^{T}\Bigg\{-\int_{I_{j}}p_{h}(x,s)(z_{h,u})_x(x)dx
    +\hat{p}(x_{j+\frac{1}{2}},s)z_{h,u}(x_{j+\frac{1}{2}}^{-})
    -\hat{p}(x_{j-\frac{1}{2}},s)z_{h,u}(x_{j-\frac{1}{2}}^{+}) \\
    & \quad\quad\quad\quad + \int_{I_{j}} \left[ \lambda v_{h} + \mu \psi_{h} + \Gamma(\cdot,u_{h},v_{h},\psi_{h}) \right](x,s) z_{h,u}(x)dx\Bigg\}ds - \int_{t}^{T}\int_{I_{j}}\psi_{h}(x,s)z_{h,u}(x)dx dW_{s}.
    \end{aligned}
\end{equation*}
From the above formula and the properties of continuous semimartingale, we can get that for any $z_{h,u}\in V_{h}^k$ and continuous semimartingale $Y$,
\begin{equation}\nonumber
    \int_{I_{j}}z_{h,u}(x)\left<u_h(x,\cdot),Y\right>_{t}^{T}dx
    =\Big<\int_{I_{j}}z_{h,u}(x)u_{h}(x,\cdot)dx,Y\Big>_{t}^{T}
    =\Big<
    -\int_{\cdot}^{T}\int_{I_{j}}\psi_{h}(x,s)z_{h,u}(x)dxdW_{s},Y
    \Big>_{t}^{T}.
\end{equation}
Then we have
    \begin{align}
    &\int_{I_{j}}\left<u_{h}(x,\cdot),u_{h}(x,\cdot)\right>_{t}^{T}dx
    =\int_{I_{j}}\Big<u_{h}(x,\cdot),
    \sum_{l=0}^{k}\boldsymbol{u}_{h}^{l,j}(\cdot)\phi_{l}^{j}(x)\Big>_{t}^{T}dx
    =\sum_{l=0}^{k}\int_{I_{j}}\phi_{l}^{j}(x)
    \Big<u_{h}(x,\cdot),\boldsymbol{u}_{h}^{l,j}(\cdot)\Big>_{t}^{T}dx \nonumber\\
    =&\sum_{l=0}^{k}
    \Big<-\int_{\cdot}^{T}
    \int_{I_{j}}\psi_{h}(x,s)\phi_{l}^{j}(x)dxdW_{s},
    \boldsymbol{u}_{h}^{l,j}(\cdot)\Big>_{t}^{T} = \int_{I_{j}} \int_{t}^{T}
    \sum_{n=0}^{k}\boldsymbol{\psi}_{h}^{n,j}(s)
    \phi_{n}^{j}(x)d\left<W,u_h(x,\cdot)\right>_{s}dx \nonumber\\
    =& - \int_{I_{j}}\Big< \int_{\cdot}^{T} \sum_{n=0}^{k} \boldsymbol{\psi}_{h}^{n,j}(s) \phi_{n}^{j}(x) dW_{s},u_{h}(x,\cdot)\Big>_{t}^{T}dx = -\sum_{n=0}^{k}\int_{I_{j}}
    \phi_{n}^{j}(x)\Big<
    \int_{\cdot}^{T}\boldsymbol{\psi}_{h}^{n,j}(s)dW_{s},
    u_{h}(x,\cdot)\Big>_{t}^{T}dx \nonumber\\
    =&\sum_{n=0}^{k}\Big<
    \int_{\cdot}^{T}\boldsymbol{\psi}_{h}^{n,j}(s)dW_{s},
    \int_{\cdot}^{T}\int_{I_{j}}\sum_{m=0}^{k}\boldsymbol{\psi}_{h}^{m,j}(s)
    \phi_{m}^{j}(x)\phi_{n}^{j}(x)dW_{s}dx
    \Big>_{t}^{T}\nonumber \\
    =&\int_{I_{j}}\int_{t}^{T}
    \sum_{n=0}^{k}\boldsymbol{\psi}_{h}^{n,j}(s)\phi_{n}^{j}(x)
    \sum_{m=0}^{k}\boldsymbol{\psi}_{h}^{m,j}(s)\phi_{m}^{j}(x)
    dsdx =\int_{I_{j}}\int_{t}^{T}|\psi_{h}(x,s)|^{2}dsdx.\label{entropy_for_uh_estimate}
    \end{align}
Sum the indicators $j$ in the above formula from $1$ to $N$, and take expectations on both sides, we get
\begin{equation}\nonumber
    \mathcal{T}_{1}(t)
    =-\mathbb{E}\Big[\int_{0}^{b}
    \left<u_{h}(x,\cdot),u_{h}(x,\cdot)
    \right>_{t}^{T} dx\Big]
    =-\mathbb{E}\Big[\int_{0}^{b}
    \int_{t}^{T}|\psi_{h}(x,s)|^{2}dsdx\Big]
    =-\mathbb{E}\Big[\int_{t}^{T}
    \|\psi_{h}(\cdot,s)\|^{2}ds\Big].
\end{equation}

\textbf{Estimation of $\mathcal{T}_{2}(t)$.}

From the definition \eqref{def_lambda} of $\lambda(x,t)$ and assumption $(\mathcal{A}_{2})$, it is known that for any $(x,t)\in [0,T]\times[0,b]$, we have
$|\lambda(x,t)|\leq 2K^{2}$. Then, for any $\epsilon>0$,
\begin{equation}\nonumber
\int_{0}^{b}\lambda(x,t)v_{h}(x,t)u_{h}(x,t)dx
\leq {\epsilon} \|v_{h}(\cdot,t)\|^2 +\frac{K^{4}}{4\epsilon}\|u_{h}(\cdot,t)\|^2.
\end{equation}
From the definition \eqref{def_mu} of $\mu(x,t)$ and assumption $(\mathcal{A}_{2})$, it is known that for any $(x,t)\in [0,T]\times[0,b]$, we have
$|\mu(x,t)|\leq K $. Then, for any $\epsilon > 0$, 
\begin{equation}\nonumber
\int_{0}^{b} \mu(x,t) \psi_{h}(x,t) u_{h}(x,t)dx
\leq \epsilon \|\psi_{h}(\cdot,t)\|^2 + \frac{K^{2}}{4\epsilon}\|u_{h}(\cdot,t)\|^2.
\end{equation}
Due to assumptions $(\mathcal{A}_3)$, it holds
\begin{align}
    &\int_{0}^{b}\Gamma(x,t,u_{h},v_{h},\psi_{h})
u_{h}(x,t)dx
    \leq \int_{0}^{b}
    |\Gamma(x,t,u_{h},v_{h},\psi_{h})-\Gamma^{0}(x,t)| \cdot |u_{h}(x,t)|dx
    +\int_{0}^{b}|\Gamma^{0}(x,t)|\cdot |u_{h}(x,t)|dx \nonumber\\
    &\leq \frac{\epsilon}{L^2}
    \|\Gamma(x,t,u_{h},v_{h},\psi_{h})-\Gamma^{0}(x,t)\|^{2}
    +\frac{L^2}{4\epsilon}
    \|u_{h}(\cdot,t)\|^{2}
    +\int_{0}^{b}|\Gamma^{0}(x,t)|\cdot|u_{h}(x,t)|dx \nonumber\\
    &\leq \epsilon
    \left(\|u_{h}(\cdot,t)\|^{2}+\|v_{h}(\cdot,t)\|^{2}
    +\|\psi_{h}(\cdot,t)\|^{2}\right)
    +\frac{L^2}{4\epsilon}
    \|u_{h}(\cdot,t)\|^{2}
    + \frac{1}{2} \|\Gamma^{0}(\cdot,t)\|^2 + \frac{1}{2}\|u_{h}(\cdot,t)\|^2.\nonumber
\end{align}
We have
\begin{equation*}
    \mathcal{T}_{2}(t) \leq \mathbb{E}\left[
        \int_{t}^{T} \left( \frac{4K^4+K^2+L^2}{2\epsilon}+2\epsilon+ 1 \right) \|u_{h}(\cdot,s)\|^2 + 4\epsilon \|v_{h}(\cdot,s)\|^2 + 4\epsilon \|\psi_{h}(\cdot,s)\|^2 + \|\Gamma^{0}(\cdot,s)\|^2 ds\right].
\end{equation*}

\textbf{Estimation of $\mathcal{T}_{3}(t)$.}

It follows \eqref{bound_for_element_u_psi} that
\begin{equation*}
    \mathbb{E}\Big[
    \mathop{\text{sup}}_{0\leq t \leq T}
    \|u_{h}(\cdot,t)\|^{2}\Big]
    +\mathbb{E}\Big[
    \int_{0}^{T}
    \|\psi_{h}(\cdot,t)\|^{2}dt\Big]
    < \infty.
\end{equation*}
Using Cauchy-Schwartz's inequality, we obtain
    \begin{align*}
    &\mathbb{E}\left[
    \left( \int_{0}^{T} \left( \int_{0}^{b}
    \psi_{h}(x,t)u_{h}(x,t)dx \right)^{2}dt \right)^{\frac{1}{2}} \right]
    \leq \mathbb{E}\left[
    \left( \int_{0}^{T}\|u_{h}(\cdot,t)\|^{2}\|\psi_{h}(\cdot,t)\|^{2}dt \right)^{\frac{1}{2}}
    \right]\\
    \leq& \mathbb{E} \left[
    \mathop{\text{sup}}_{0\leq t\leq T}\|u_{h}(\cdot,t)\|
    \left( \int_{0}^{T}\|\psi_{h}(\cdot,t)\|^{2}dt \right)^{\frac{1}{2}} \right] \leq \left( \mathbb{E} \left[
    \mathop{\text{sup}}_{0\leq t\leq T}
    \|u_{h}(\cdot,t)\|^{2}\right]\right)^{\frac{1}{2}}
    \left(\mathbb{E}\left[\int_{0}^{T}
    \|\psi_{h}(\cdot,t)\|^{2}dt\right]\right)^{\frac{1}{2}}
    < \infty.
    \end{align*}
Thus $\int_0^\cdot \int_{0}^{b}\psi_{h}(x,t)u_{h}(x,t)dx dW_t$ is a integrable martingale, then thanks to the properties of martingale, we deduce 
\begin{equation}\nonumber
    \mathcal{T}_{3}(t) \equiv 0,\quad t\in[0,T].
\end{equation}

\textbf{Estimation of $\mathcal{T}_{4}(t)$.}

Note that
\begin{align}
   &\sum_{j=1}^{N}\Bigg[ -\int_{I_{j}}u_{h}(x,t)(p_h)_x(x,t)dx +\hat{u}(x_{j+\frac{1}{2}},t)p_h(x_{j+\frac{1}{2}}^{-},t)-\hat{u}(x_{j-\frac{1}{2}},t)p_h(x_{j-\frac{1}{2}}^{+},t) \nonumber\\
   &\quad\quad - \int_{I_{j}}p_{h}(x,t)(u_h)_x(x,t)dx
        +\hat{p}(x_{j+\frac{1}{2}},t)u_h(x_{j+\frac{1}{2}}^{-},t)
        -\hat{p}(x_{j-\frac{1}{2}},t)u_h(x_{j-\frac{1}{2}}^{+},t)\Bigg]\nonumber\\
    =&\sum_{j=2}^{N-1}\Bigg[
    -\int_{I_{j}}u_h(x,t)(p_h)_{x}(x,t)dx
    +u_h\big(x_{j+\frac{1}{2}}^{-},t\big)
    p_h\big(x_{j+\frac{1}{2}}^{-},t\big)
    -u_h\big(x_{j-\frac{1}{2}}^{-},t\big)
    p_h\big(x_{j-\frac{1}{2}}^{+},t\big) \nonumber\\
    &\quad\quad -\int_{I_{j}}p_h(x,t)(u_h)_{x}(x,t)dx
    +p_h\big(x_{j+\frac{1}{2}}^{+},t\big)
    u_h\big(x_{j+\frac{1}{2}}^{-},t\big)
    -p_h\big(x_{j-\frac{1}{2}}^{+},t\big)
    u_h\big(x_{j-\frac{1}{2}}^{+},t\big)\Bigg]\nonumber\\
    & -\int_{I_{1}}u_h(x,t)(p_h)_{x}(x,t)dx
    +u_h\big(x_{\frac{3}{2}}^{-},t\big)
    p_h\big(x_{\frac{3}{2}}^{-},t\big)
    -u_h\big(x_{\frac{1}{2}}^{+},t\big)
    p_h\big(x_{\frac{1}{2}}^{+},t\big) -\int_{I_{1}}p_h(x,t)(u_h)_{x}(x,t)dx
    +p_h\big(x_{\frac{3}{2}}^{+},t\big)
    u_h\big(x_{\frac{3}{2}}^{-},t\big)\nonumber\\
    & -\int_{I_{N}} \left[u_h(p_h)_{x} + p_h(u_h)_{x} \right](x,t)dx
    +u_h\big(x_{N+\frac{1}{2}}^{-},t\big)
    p_h\big(x_{N+\frac{1}{2}}^{-},t\big)
    -u_h\big(x_{N-\frac{1}{2}}^{-},t\big)
    p_h\big(x_{N-\frac{1}{2}}^{+},t\big) 
    -p_h\big(x_{N-\frac{1}{2}}^{+},t\big)
    u_h\big(x_{N-\frac{1}{2}}^{+},t\big) \nonumber\\
    =& p_h\big(x_{N-\frac{1}{2}}^{+},t\big)
    u_h\big(x_{N-\frac{1}{2}}^{-},t\big) - p_h\big(x_{\frac{3}{2}}^{+},t\big)
    u_h\big(x_{\frac{3}{2}}^{-},t\big) + p_h\big(x_{\frac{3}{2}}^{+},t\big)
    u_h\big(x_{\frac{3}{2}}^{-},t\big) - 
    u_h\big(x_{N-\frac{1}{2}}^{-},t\big) p_h\big(x_{N-\frac{1}{2}}^{+},t\big)=0.\label{eq2408081}
\end{align}

Moreover, it can be known from the approximation equation \eqref{approx_eq_c} that
    \begin{align}
    &\int_{0}^{b}v_{h}(x,t)p_{h}(x,t)dx
    =\int_{0}^{b}\left[\frac{1}{2}\left(\sigma^{2}(x,t)
    +\bar{\sigma}^{2}(x,t)\right)v_{h}(x,t)
    +\sigma(x,t)\psi_{h}(x,t)\right]v_{h}(x,t)dx \nonumber\\
    \geqslant& \int_{0}^{b}
    \frac{1}{2}\left(\sigma^{2}(x,t)
    +\bar{\sigma}^{2}(x,t)\right) |v_{h}(x,t)|^{2}dx
    -\frac{1+\frac{\kappa^2}{2K^2}}{2}
    \int_{0}^{b} \sigma^{2}(x,t) |v_{h}(x,t)|^{2}dx
    -\frac{1}{2+ \frac{\kappa^2}{K^2} }\int_{0}^{b}|\psi_{h}(x,t)|^{2}dx \nonumber\\
    \geqslant& 
    \frac{\kappa^2}{4} \|v_{h}(\cdot,t)\|^{2} 
    -\frac{1}{2}\left( 1-\frac{\kappa^2}{2K^2+\kappa^2}\right) \|\psi_{h}(\cdot,t)\|^{2}.\label{stability_step1_T_0}
    \end{align}

Concluding the above estimates, we obtain
\begin{align*}
    &\mathbb{E}\left[\|u_{h}(\cdot,s)\|^2\right] + \left( \frac{\kappa^2}{2} - 4\epsilon \right) \mathbb{E}\left[ \int_{t}^{T}  \|v_{h}(\cdot,s)\|^2 ds\right] + \left( \frac{\kappa^2}{2K^2+\kappa^2} - 4\epsilon \right) \mathbb{E}\left[ \int_{t}^{T}  \|\psi_{h}(\cdot,s)\|^2 ds\right]\\
    \leq& \left( \frac{4K^4+K^2+L^2}{2\epsilon}+2\epsilon+ 1 \right) \mathbb{E}\left[ \int_{t}^{T} \|u_{h}(\cdot,s)\|^2 ds\right] + \mathbb{E}\left[ \int_{t}^{T} \|\Gamma^{0}(\cdot,s)\|^2 ds\right] + \mathbb{E}\left[ \|G\|^2 \right].
\end{align*}
Taking $\epsilon$ small enough, we have
\begin{equation*}
        \mathbb{E}\Big[\|u_{h}(\cdot,t)\|^{2}\Big]
        +\mathbb{E}\Big[
        \int_{t}^{T}\int_{0}^{b}\left(
        |v_{h}|^{2}
        +|\psi_{h}|^{2}\right)(x,s)dxds\Big] \leq \mathbb{E}\Big[\|G\|^{2}\Big] 
        +C\Big(\mathbb{E}\Big[
        \int_{t}^{T}\|u_{h}(\cdot,s)\|^{2}ds\Big]
        +\mathbb{E}\Big[
        \int_{t}^{T} \|\Gamma^{0}(\cdot,s)\|^2 ds
        \Big]\Big).
\end{equation*}
Furthermore, combining Gronwall's inequality, we can get
\begin{equation}\nonumber
    \mathbb{E}\left[\|u_{h}(\cdot,t)\|^{2}\right]
    \leq \mathbb{E}\Big[\|G\|^{2}\Big]
    +C\mathbb{E}\Big[
    \int_{0}^{T} \|\Gamma^{0}(\cdot,s)\|^2 ds \Big],
\end{equation}
It follows that
\begin{equation}\label{stability_step1_conclusion}
    \begin{aligned}
    \mathop{\text{sup}}_{0\leq t \leq T}
    \mathbb{E}\Big[\|u_{h}(\cdot,t)^{2}|\Big]
    +\mathbb{E}\Big[
    \int_{0}^{T}\|v_{h}(\cdot,s)\|^{2}ds\Big]
    +\mathbb{E}\Big[
    \int_{0}^{T}\|\psi_{h}(\cdot,s)\|^{2}ds
    \Big]
    \leq C\left(\mathbb{E}\Big[\|G\|^{2}\Big]
    +\mathbb{E}\Big[
    \int_{0}^{T} \|\Gamma^{0}(\cdot,s)\|^2 ds
    \Big]\right).
    \end{aligned}
\end{equation}
Step 2. In view of~\eqref{stability_equation}, we get
\begin{equation*}
    \begin{aligned}
    \mathbb{E}\Big[
    \mathop{\text{sup}}_{t\leq r\leq T}\
    |u_{h}(\cdot,r)\| ^{2}\Big]
    \leq
    \mathbb{E}\Big[\|G(\cdot)\|^{2}\Big]
    +\mathcal{T}_{5}(t)+\mathcal{T}_{6}(t)
    +\mathcal{T}_{7}(t)+\mathcal{T}_{8}(t),
    \end{aligned}
\end{equation*}
with
    \begin{align*}
    \mathcal{T}_{5}(t):=& 2\mathbb{E}\Big[
    \mathop{\text{sup}}_{t\leq r\leq T}
    \int_{r}^{T}\int_{0}^{b}
    -\sigma(x,s)\psi_{h}(x,s)v_{h}(x,s)dxds\Big],\\
    \quad\mathcal{T}_{6}(t):=&
    2\mathbb{E}\Big[
    \mathop{\text{sup}}_{t\leq r\leq T}
    \int_{r}^{T}\int_{0}^{b}\left[\lambda v_{h}u_{h} + \mu\psi_{h}u_{h} + \Gamma(\cdot,u_{h},v_{h},\psi_{h}) u_{h} \right](x,s) dxds\Big],\\
    \mathcal{T}_{7}(t)
    :=&2\mathbb{E}\Big[
    \mathop{\text{sup}}_{t\leq r\leq T}
    \int_{r}^{T}\int_{0}^{b}
    -\psi_{h}(x,s)u_{h}(x,s)dxdW_{s}\Big],\\
    \mathcal{T}_{8}(t)
    :=&2\mathbb{E}\Bigg[
    \sup_{t\leq r \leq T} \int_{r}^{T}\sum_{j=1}^{N}\bigg\{
    -\int_{I_{j}}p_{h}(x,s)(u_h)_x(x,s)dx
        +\hat{p}(x_{j+\frac{1}{2}},s)u_{h}(x_{j+\frac{1}{2}}^{-},s)
        -\hat{p}(x_{j-\frac{1}{2}},s)u_{h}(x_{j-\frac{1}{2}}^{+},s) \\
    & \quad\quad\quad\quad\quad\quad\quad\quad -\int_{I_{j}}u_{h}(x,s)(p_{h})_x(x,s)dx +\hat{u}(x_{j+\frac{1}{2}},s)p_{h}(x_{j+\frac{1}{2}}^{-},s)-\hat{u}(x_{j-\frac{1}{2}},s)p_{h}(x_{j-\frac{1}{2}}^{+},s) \bigg\}ds \Bigg].
    \end{align*}

\textbf{Estimation of $\mathcal{T}_{5}(t)$.}

By assumption $(\mathcal{A}_{2})$, one has
    \begin{align*}
    \mathop{\text{sup}}_{t\leq r\leq T}
    \int_{r}^{T}\int_{0}^{b}
    -\sigma(x,s)\psi_{h}(x,s)v_{h}(x,s)dxds
    &\leq
    \int_{t}^{T}\int_{0}^{b}
    |\sigma(x,s)\psi_{h}(x,s)v_{h}(x,s)|dxds\\
    &\leq
    \frac{1}{2}\int_{t}^{T}\int_{0}^{b}
    K^{2}|\psi_{h}(x,s)|^{2}dxds
    +\frac{1}{2}\int_{t}^{T}\int_{0}^{b}
    |v_{h}(x,s)|^{2}dxds.
    \end{align*}
Hence,
\begin{equation}\nonumber
    \mathcal{T}_{5}(t)\leq 
    C\Big(\mathbb{E}\Big[
    \int_{t}^{T}\int_{0}^{b}
    |\psi_{h}(x,s)|^{2}dxds\Big]
    +\mathbb{E}\Big[
    \int_{t}^{T}\int_{0}^{b}
    |v_{h}(x,s)|^{2}dxds\Big]\Big).
\end{equation}

\textbf{Estimation of $\mathcal{T}_{6}(t)$.}

It holds that
\begin{equation*}
    \begin{aligned}
    \mathop{\text{sup}}_{t\leq r\leq T}
    \int_{r}^{T}\int_{0}^{b} \left[\lambda v_{h}u_{h} + \mu\psi_{h}u_{h} + \Gamma(\cdot,u_{h},v_{h},\psi_{h}) u_{h} \right](x,s) dxds \leq\int_{t}^{T}\int_{0}^{b}\big| \lambda v_{h}u_{h} + \mu\psi_{h}u_{h} + \Gamma(\cdot,u_{h},v_{h},\psi_{h}) u_{h} \big|(x,s) dxds.
    \end{aligned}
\end{equation*}
Considering the first two terms in $\mathcal{T}_{6}(t)$, we have
    \begin{align}
    &\int_{t}^{T}\int_{0}^{b}
    |\lambda(x,s)v_{h}(x,s)u_{h}(x,s)|dxds + \int_{t}^{T}
    \int_{0}^{b}|\mu(x,s)\psi_{h}(x,s)u_{h}(x,s)|dxds \nonumber\\
    \leq &
    C\int_{t}^{T}\|v_{h}(\cdot,s)\|^{2} + \|\psi_{h}(\cdot,s)\|^{2} + \|u_{h}(\cdot,s)\|^{2}ds\leq
    C\int_{t}^{T}\|v_{h}(\cdot,s)\|^{2} + \|\psi_{h}(\cdot,s)\|^{2} ds
    +C\int_{t}^{T}
    \mathop{\text{sup}}_{s\leq r\leq T}
    \|u_{h}(\cdot,r)\|^{2}ds.\nonumber
    \end{align}
We have for the third term of $\mathcal{T}_{6}(t)$, 
    \begin{align}
    &\int_{t}^{T}\int_{0}^{b}
    \left|\Gamma(\cdot,u_{h},v_{h},\psi_{h})
    u_{h}\right|(x,s)dxds
    \leq
    C\int_{t}^{T}\left(
    \|u_{h}(\cdot,s)\|^{2}
    +\|v_{h}(\cdot,s)\|^{2}
    +\|\psi_{h}(\cdot,s)\|^{2}
    +\|\Gamma^{0}(\cdot,s)\|^{2}\right)ds \nonumber\\
    &\leq
    C\int_{t}^{T}\left(
    \|v_{h}(\cdot,s)\|^{2}
    +\|\psi_{h}(\cdot,s)\|^{2}
    +\|\Gamma^{0}(\cdot,s)\|^{2}\right)ds
    +C\int_{t}^{T}
    \mathop{\text{sup}}_{s\leq r\leq T}
    \|u_{h}(\cdot,r)\|^{2}ds.\nonumber
    \end{align}

\textbf{Estimation of $\mathcal{T}_{7}(t)$.}

Using the BDG inequality and Cauchy-Schwartz's inequality, we obtain
    \begin{align*}
    &\mathbb{E}\Big[
    \big|\mathop{\text{sup}}_{t\leq r\leq T}
    \int_{r}^{T}\int_{0}^{b}
    (\psi_{h} u_{h})(x,s)dxdW_{s}\big|\Big]
    \leq 
    C\mathbb{E}\Big[\big(
    \int_{t}^{T}\big|\int_{0}^{b}
    (\psi_{h} u_{h})(x,s)dx\big|^{2}ds
    \big)^{\frac{1}{2}}\Big] \leq C\mathbb{E}\Big[\big(
    \int_{t}^{T}\|u_{h}(\cdot,s)\|^{2}
    \|\psi_{h}(\cdot,s)\|^{2}ds
    \big)^{\frac{1}{2}}\Big]\\
    \leq& C\mathbb{E}\Big[
    \mathop{\text{sup}}_{t\leq r\leq T}
    \|u_{h}(\cdot,r)\|
    \big(\int_{t}^{T}
    \|\psi_{h}(\cdot,s)\|^{2}ds
    \big)^{\frac{1}{2}}\Big]\leq
    \frac{1}{4}\mathbb{E}\Big[
    \mathop{\text{sup}}_{t\leq r\leq T}
    \|u_{h}(\cdot,r)\|^{2}
    \Big]
    +C\mathbb{E}\Big[
    \int_{t}^{T}
    \|\psi_{h}(\cdot,s)\|^{2}ds\Big],
    \end{align*}
which gives that
\begin{equation}\nonumber
    \mathcal{T}_{7}(t)
    \leq
    \frac{1}{2}\mathbb{E}\Big[
    \mathop{\text{sup}}_{t\leq r\leq T}
    \|u_{h}(\cdot,r)\|^{2}
    \Big]
    +C\mathbb{E}\Big[
    \int_{t}^{T}
    \|\psi_{h}(\cdot,s)\|^{2}ds\Big].
\end{equation}

\textbf{Estimation of $\mathcal{T}_{8}(t)$.}

From~\eqref{eq2408081}, it is known that
\begin{equation}\nonumber
    \mathcal{T}_{8}(t)=0,\quad t\in[0,T].
\end{equation}

Concluding the above estimates, we have
\begin{equation*}
    \begin{aligned}
    \mathbb{E}\Big[
    \mathop{\text{sup}}_{t\leq r\leq T}\
    \|u_{h}(\cdot,r)\| ^{2}\Big]
    \leq \Big(\mathbb{E}\Big[\|G\|^{2}\Big] + C\mathbb{E}\Big[\int_{t}^{T} \|v_{h}(\cdot,s)\|^{2} + \|\psi_{h}(\cdot,s)\|^{2}  + \|\Gamma^{0}(\cdot,s)\|^{2}ds\Big]
    +C\int_{t}^{T}
    \mathbb{E}\big[
    \mathop{\text{sup}}_{s\leq r\leq T}
    \|u_{h}(\cdot,r)\|^{2}\big]ds.
    \end{aligned}
\end{equation*}
From the conclusion \eqref{stability_step1_conclusion} in the first step and Gronwall's inequality, we get
\begin{equation}\label{stability_step2_conclusion_pre}
    \mathbb{E}\Big[
    \mathop{\text{sup}}_{t\leq r\leq T}\
    \|u_{h}(\cdot,r)\| ^{2}\Big]
    \leq
    C\Big(\mathbb{E}\Big[\|G\|^{2}\Big]
    +\mathbb{E}\Big[
    \int_{t}^{T}
    \|\Gamma^{0}(\cdot,s)\|^{2}ds\Big]\Big),
\end{equation}
Combining the first step conclusion \eqref{stability_step1_conclusion} and the second step conclusion \eqref{stability_step2_conclusion_pre}, the stability of the LDG method can be obtained.  
\end{proof}

\subsection{The optimal estimate error of the LDG method}
In order to establish the optimal estimate error of our LDG method for BSPDE \eqref{BSPDE_in_Qiu_paper}, we begin with the following estimate: 
    \begin{equation}
   \mathop{\text{sup}}_{0\leq t\leq T}
    \mathbb{E}\Big[\|u(\cdot,t)-u_{h}(\cdot,t)\|^{2}\Big]^{\frac{1}{2}}
    +\mathbb{E}\Big[\int_{0}^{T}
    \|u_{x}(\cdot,s)-v_{h}(\cdot,s)\|^{2}ds\Big]^{\frac{1}{2}}
    +\mathbb{E}\Big[\int_{0}^{T}
    \|\psi(\cdot,s)-\psi_{h}(\cdot,s)\|^{2}ds\Big]^{\frac{1}{2}}
    \leq Ch^{k+1}.
    \end{equation}

To prove this, we note that in the approximation equation \eqref{approx_eq}, the numerical solution $(u_{h},v_{h},\psi_{h})$ is replaced by the real solution $(u,u_x,\psi)$, the original  numerical format still holds. This is because that Lemma \ref{highordersolution} ensures that the real solution $(u,u_x,\psi)$ is continuous w.r.t. $x$, then if bring it into the approximation equation, the numerical flux is exactly the values at the endpoints, which are formally consistent with the equation that the numerical solution satisfies.

For $\varphi=u,v,\psi,p$, we define the error terms as follows
\begin{equation}\label{error_step1_error_def}
  e_{\varphi}(x,t):=\left(\varphi-\varphi_{h}\right)(x,t)
  =\left(\xi_{\varphi}-\eta_{\varphi}\right)(x,t),
\end{equation}
with 
  \begin{align*}
  &\xi_{u}(x,t):=\left(\mathcal{P}^{-}u-u_{h}\right)(x,t),\quad\quad
  \eta_{u}(x,t):=\left(\mathcal{P}^{-}u-u\right)(x,t),\\
  &\xi_{v}(x,t):=\left(\mathcal{P}^{+}v-v_{h}\right)(x,t),\quad\quad 
  \eta_{v}(x,t):=\left(\mathcal{P}^{+}v-v\right)(x,t),\\
  &\xi_{\psi}(x,t):=\left(\mathcal{P}^{-}\psi-\psi_{h}\right)(x,t),\quad\quad 
  \eta_{\psi}(x,t):=\left(\mathcal{P}^{-}\psi-\psi\right)(x,t),\\
  &\xi_{p}(x,t):=\left(\mathcal{P}^{+}p-p_{h}\right)(x,t),\quad\quad 
  \eta_{p}(x,t):=\left(\mathcal{P}^{+}p-p\right)(x,t).
  \end{align*}
where $\mathcal{P}^{-}$ and $\mathcal{P}^{+}$ are Gauss-Radau projections defined in Definition \ref{Gauss_Radou_projection_plus} and \ref{Gauss_Radou_projection_minus}.

We can get the error equation as follows
\begin{subnumcases}{}\nonumber
        \int_{I_{j}}d e_u(x,t)z_{h,u}(x)dx=-\Bigg\{ -\int_{I_{j}}e_p(x,t)(z_{h,u})_x(x)dx
        +\hat{e}_p(x_{j+\frac{1}{2}},t)z_{h,u}(x_{j+\frac{1}{2}}^{-})
        -\hat{e}_p(x_{j-\frac{1}{2}},t)z_{h,u}(x_{j-\frac{1}{2}}^{+})\nonumber\\
        \quad\quad\quad\quad\quad\quad\quad\quad +\int_{I_{j}}\left[\lambda e_v + \mu e_\psi + \left(\Gamma(\cdot,u,v,\psi) - \Gamma(\cdot,u_{h},v_{h},\psi_{h})\right) \right](x,t) z_{h,u}(x)dx \Bigg\}dt +\int_{I_{j}} e_\psi(x,t) z_{h,u}(x)dx dW_{t},\nonumber\\
        \int_{I_{j}} e_v(x,t)z_{h,v}(x)dx=-\int_{I_{j}} e_u(x,t) (z_{h,v})_x(x)dx +\hat{e}_u(x_{j+\frac{1}{2}},t)z_{h,v}(x_{j+\frac{1}{2}}^{-})-\hat{e}_u(x_{j-\frac{1}{2}},t)z_{h,v}(x_{j-\frac{1}{2}}^{+}),\nonumber\\
        \int_{I_{j}} e_p(x,t)z_{h,p}(x)dx=\int_{I_{j}}\left[\frac{1}{2}\left(\sigma^{2}(x,t)+\bar{\sigma}^{2}(x,t)\right) e_v(x,t) +\sigma(x,t) e_\psi(x,t) \right] z_{h,p}(x)dx,\nonumber\\
        \int_{I_{j}} e_u(x,T)z_{h,G}(x)dx= 0.\nonumber
\end{subnumcases}

In view the definition of the Gauss-Radau projection $\mathcal{P}^{+}$, we have
\begin{align}
    & -\int_{I_{j}}d \xi_u(x,t)z_{h,u}(x)dx + \int_{I_{j}}d \eta_u(x,t)z_{h,u}(x)dx + \int_{I_{j}} e_\psi(x,t) z_{h,u}(x)dx dW_{t} \nonumber\\
    =& \Bigg\{ -\int_{I_{j}} \xi_p(x,t)(z_{h,u})_x(x)dx
        +\hat{\xi}_p(x_{j+\frac{1}{2}},t)z_{h,u}(x_{j+\frac{1}{2}}^{-})
        -\hat{\xi}_p(x_{j-\frac{1}{2}},t)z_{h,u}(x_{j-\frac{1}{2}}^{+})\nonumber\\
    & \quad\quad +\int_{I_{j}}\left[\lambda (\xi_v -\eta_v) + \mu (\xi_\psi - \eta_\psi) + \left(\Gamma(\cdot,u,v,\psi) - \Gamma(\cdot,u_{h},v_{h},\psi_{h})\right) \right](x,t) z_{h,u}(x)dx \Bigg\}dt,\nonumber
\end{align}
where
$$
\hat{\xi}_p(x_{j+\frac{1}{2}},t):=\xi_{p}(x_{j+\frac{1}{2}}^{+},t),
    \quad j=1,2...,N-1,\quad\quad\text{and}\quad\quad \hat{\xi}_p(x_{\frac{1}{2}},t)=\hat{\xi}_p(x_{N+\frac{1}{2}},t)=0.
$$
Similarly, by the definition of the Gauss-Radau projection $\mathcal{P}^{-}$, taking $z_{h,v}=\xi_p(\cdot,t)$, we get
$$
\int_{I_{j}} (\xi_v \xi_p)(x,t)dx=\int_{I_{j}} (\eta_v \xi_p)(x,t) dx - \int_{I_{j}} \left[ \xi_u (\xi_p)_x\right](x,t) dx +\hat{\xi}_u(x_{j+\frac{1}{2}},t) \xi_p(x_{j+\frac{1}{2}}^{-},t)-\hat{\xi}_u(x_{j-\frac{1}{2}},t) \xi_p(x_{j-\frac{1}{2}}^{+},t),
$$
where
$$
    \hat{\xi}_u(x_{j+\frac{1}{2}},t):=\xi_{u}(x_{j+\frac{1}{2}}^{-},t),
    \quad j=1,2...,N-1,\quad\quad\text{and}\quad\quad
    \hat{\xi}_u(x_{\frac{1}{2}},t)=\xi_{u}(0^{+},t),\quad 
    \hat{\xi}_u(x_{N+\frac{1}{2}},t)=\xi_{u}(b^{-},t),
$$
in which we use the fact that
$$
\xi_p(0^+,t)= \mathcal{P}^{+}p (0^+,t) - p_{h}(0^+,t) = p (0^+,t) - 0 = 0.
$$
By the analogical computation that leads to~\eqref{eq2408081}, we get
\begin{align*}
    \sum_{j=1}^N \Bigg\{ &-\int_{I_{j}} \left[ \xi_u (\xi_p)_x\right](x,t) dx +\hat{\xi}_u(x_{j+\frac{1}{2}},t) \xi_p(x_{j+\frac{1}{2}}^{-},t)-\hat{\xi}_u(x_{j-\frac{1}{2}},t) \xi_p(x_{j-\frac{1}{2}}^{+},t)\\
    & -\int_{I_{j}} \left[ \xi_p (\xi_u)_x\right](x,t) dx +\hat{\xi}_p(x_{j+\frac{1}{2}},t) \xi_u(x_{j+\frac{1}{2}}^{-},t)-\hat{\xi}_p(x_{j-\frac{1}{2}},t) \xi_u(x_{j-\frac{1}{2}}^{+},t) \Bigg\} \equiv 0.
\end{align*}
Taking $z_{h,u}=\xi_u(\cdot,t)$ and $z_{h,p}=\xi_v(\cdot,t)$, as well as applying It\^o's formula to $|\xi_{u}(x,t)|^2$, it follows that
\begin{align}
    & \|\xi_u(\cdot,t)\|^2 + \int_{0}^{b}\left< \xi_{u}(x,\cdot),\xi_{u}(x,\cdot) \right>_{t}^{T} dx + \int_t^T \int_0^b \left[\left(\sigma^{2}+\bar{\sigma}^{2}\right) |\xi_v|^2 +2\sigma \xi_\psi \xi_v \right] (x,s) dx ds \nonumber\\
    =& \|\xi_u(\cdot,T)\|^2 - 2 \int_0^b \int_t^T \xi_u(x,s) d \eta_u(x,s)dx - 2 \int_t^T \int_0^b e_\psi(x,s) \xi_u(x,s) dx dW_s + 2 \int_t^T \int_0^b \eta_v(x,s) \xi_p(x,s) dxds \nonumber\\
    & - 2 \int_t^T \int_0^b \eta_p(x,s) \xi_v(x,s) dxds + \int_t^T \int_0^b \left[\left(\sigma^{2}+\bar{\sigma}^{2}\right) \eta_v \xi_v +2\sigma \eta_\psi \xi_v \right] (x,s) dx ds \nonumber\\
    & + 2 \int_t^T \int_0^b \left[\lambda (\xi_v -\eta_v)\xi_u + \mu (\xi_\psi - \eta_\psi)\xi_u + \left(\Gamma(\cdot,u,v,\psi) - \Gamma(\cdot,u_{h},v_{h},\psi_{h})\right)\xi_u \right](x,s) dx ds,\label{error_step1_error_equation_plugin_0}
\end{align}
which yields that
\begin{equation*}
  \begin{aligned}
  \mathbb{E}\left[ \|\xi_u(\cdot,t)\|^2 \|^{2}\right]
  +\mathcal{S}_{0}(t)+\mathcal{S}_{1}(t)
  =\mathbb{E}\left[\|\xi_{u}(\cdot,T)\|^{2}\right]
  +\mathcal{S}_{2}(t)
  +\mathcal{S}_{3}(t)
+\mathcal{S}_{4}(t)
  +\mathcal{S}_{5}(t),
  \end{aligned}
\end{equation*}
with
  \begin{align*}
  &\mathcal{S}_{0}(t)= \mathbb{E} \left[ \int_t^T \int_0^b \left[\left(\sigma^{2}+\bar{\sigma}^{2}\right) |\xi_v|^2 +2\sigma \xi_\psi \xi_v \right] (x,s) dx ds \right],\quad\quad \mathcal{S}_{1}(t)=\mathbb{E}\left[ \int_{0}^{b}\left< \xi_{u}(x,\cdot),\xi_{u}(x,\cdot) \right>_{t}^{T} dx \right],\\
&\mathcal{S}_{2}(t)=-2\mathbb{E} \left[ \int_0^b \int_t^T \xi_u(x,s) d \eta_u(x,s)dx \right], \quad\quad \mathcal{S}_{3}(t)=-2\mathbb{E}\left[ \int_t^T \int_0^b e_\psi(x,s) \xi_u(x,s) dx dW_s \right],\\
  &\mathcal{S}_{4}(t) = \mathbb{E}\left[ \int_t^T \int_0^b \left[ 2\xi_p\eta_v - 2\xi_v\eta_p + \left(\sigma^{2}+\bar{\sigma}^{2}\right) \xi_v \eta_v + 2\sigma \xi_v \eta_\psi - 2\lambda\xi_u\eta_v - 2\mu\xi_u\eta_\psi \right] (x,s) dxds\right],\\
  &\mathcal{S}_{5}(t) = 2\mathbb{E}\left[ \int_t^T \int_0^b \left[ \lambda\xi_v\xi_u + \mu\xi_\psi\xi_u + \left( \Gamma(\cdot,u,v,\psi) - \Gamma(\cdot,u_{h},v_{h},\psi_{h})\right)\xi_u \right] (x,s) dxds\right].
  \end{align*}
  
Now, we estimate the above items in turn.

\textbf{Estimation of $\mathcal{S}_{2}(t)$.}

By the definition \ref{Gauss_Radou_projection_minus} of Gauss-Radau projection $\mathcal{P}^{-}$ and the exchangeability of time variable differentials, from~\eqref{ldg_eq_a} we obtain
\begin{equation}\label{raw_eq_guass_radou}
  -d_{t}\left(\mathcal{P}^{-}u\right)(x,t)=\mathcal{P}^{-}\big(-d_{t} u\big) (x,t)=\mathcal{P}^{-}\big( p_x +\lambda v + \mu \psi + \Gamma(\cdot,u,v,\psi) \big)(x,t) dt -\mathcal{P}^{-}\psi (x,t) dW_{t},
\end{equation}
then $\eta_{u}$ satisfies the following equation 
\begin{equation}\label{error_step1_eta_u_ldg_eq}
  -d\eta_{u}(x,t)=\left(\mathcal{P}^{-}-\mathcal{I}\right)
  \big( p_x +\lambda v + \mu \psi + \Gamma(\cdot,u,v,\psi) \big)(x,t) dt -\left(
  \mathcal{P}^{-} - \mathcal{I}\right)\psi (x,t) dW_{t}.
\end{equation}
Furthermore, for $\mathcal{S}_{2}(t)$, one has
  \begin{align*}
    &-2 \mathbb{E} \left[ \int_{0}^{b} \int_t^T \xi_{u}(x,s)d\eta_{u}(x,s)dx \right] + 2 \mathbb{E} \left[ \int_t^T \int_{0}^{b} \xi_{u}(x,s) \left( \mathcal{P}^{-} -\mathcal{I} \right) \psi(x,s) dx dW_{s} \right]\\
    =& 2 \mathbb{E} \left[ \int_t^T \int_{0}^{b} \xi_{u}(x,s)\left(\mathcal{P}^{-}-\mathcal{I}\right) \big( p_x +\lambda v + \mu \psi + \Gamma(\cdot,u,v,\psi) \big)(x,s) dxds \right].
  \end{align*}  
It is easy to verify that $\int_{0}^{\cdot}\int_{0}^{b}\xi_{u}(x,s)\left(\mathcal{P}^{-}-\mathcal{I}\right)
\left(\psi_{t}(\cdot)\right)(x)dxdW_{t}$ is a martingale. In addition, from Lemma \ref{highordersolution} and the properties satisfied by the Gauss-Radau projection \ref{Gauss_Radou_property}, we deduce that
\begin{equation}\nonumber
  \mathcal{S}_{2}(t)=2 \mathbb{E} \left[ \int_t^T \int_{0}^{b} \left[\xi_{u} \left(\mathcal{P}^{-}-\mathcal{I}\right) \big( p_x +\lambda v + \mu \psi + \Gamma(\cdot,u,v,\psi) \big)\right] (x,s) dxds \right] \leq \mathbb{E}\left[ \int_{t}^{T} \|\xi_{u}(\cdot,s)\|^{2}ds \right]+ C h^{2k+2}.
\end{equation}

\textbf{Estimation of $\mathcal{S}_{1}(t)$.}

By \eqref{raw_eq_guass_radou}, we have for any $r_{h}\in V_{h}^k$,
\begin{equation*} 
-\int_{I_{j}}r_{h}(x)
  d_{t}\left(\mathcal{P}^{-}u\right)(x,t)dx
  +\int_{I_{j}}r_{h}(x)
  \mathcal{P}^{-} \psi(x,t) dxdW_{t}=\int_{I_{j}}r_{h}(x) \mathcal{P}^{-}\big( p_x +\lambda v + \mu \psi + \Gamma(\cdot,u,v,\psi) \big)(x,t)dxdt,
\end{equation*}
Combining \eqref{error_step1_error_def} and \eqref{approx_eq_a} gives
  \begin{align*} 
  &-\int_{I_{j}}r_{h}(x)
  d\xi_{u}(x,t)dx
  +\int_{I_{j}}r_{h}(x)
  \xi_\psi (x,t)dW_{t}\\
  =&\Bigg\{ \int_{I_{j}}p_{h}(x,t)(r_{h})_x(x)dx
        -\hat{p}(x_{j+\frac{1}{2}},t)r_{h}(x_{j+\frac{1}{2}}^{-})
        +\hat{p}(x_{j-\frac{1}{2}},t)r_{h}(x_{j-\frac{1}{2}}^{+}) - \int_{I_{j}}\left[\lambda v_{h} + \mu\psi_{h} + \Gamma(\cdot,u_{h},v_{h},\psi_{h}) \right](x,t) r_{h}(x)dx \\
        & \quad + \int_{I_{j}}r_{h}(x) \mathcal{P}^{-}\big( p_x +\lambda v + \mu \psi + \Gamma(\cdot,u,v,\psi) \big)(x,t)dx \Bigg\}dt.
  \end{align*}
A similar calculation as in deriving the estimate \eqref{entropy_for_uh_estimate} provides
\begin{equation*}
  \mathcal{S}_{1}(t) = \int_0^b \left<\xi_{u}(x,\cdot),\xi_{u}(x,\cdot) \right>_{t}^{T} dx = \mathbb{E} \left[ \int_{t}^{T} \left\|\xi_{\psi}(\cdot,s)\right\|^{2} ds \right].
\end{equation*}

\textbf{Estimation of $\mathcal{S}_{0}(t)$.}

Use the same method as in~\eqref{stability_step1_T_0}, we have
    $$
    \mathcal{S}_{0}(t) \geqslant \frac{\kappa^2}{2} \mathbb{E} \left[ \int_{t}^{T} \|\xi_v(\cdot,s)\|^{2}ds \right] - \left( 1-\frac{\kappa^2}{2K^2+\kappa^2}\right) \mathbb{E} \left[ \int_{t}^{T} \|\xi_\psi(\cdot,s)\|^{2} ds \right].
    $$

\textbf{Estimation of $\mathcal{S}_{3}(t)$.}

We can verify that $\int_0^\cdot \int_0^b e_\psi(x,s) \xi_u(x,s) dx dW_s$ is a martingale, which gives that
$$
\mathcal{S}_{3}(t)\equiv 0.
$$

\textbf{Estimation of $\mathcal{S}_{4}(t)$.}

Taking $z_{h,p}=\xi_p(\cdot,t)$ in the error equation, we get
$$
\mathbb{E} \left[ \int_{t}^{T}\|\xi_p(\cdot,s)\|^{2} ds \right] \leq C \mathbb{E} \left[ \int_{t}^{T}\|\xi_v(\cdot,s)\|^{2} ds \right] + C \mathbb{E} \left[ \int_{t}^{T}\|\xi_\psi(\cdot,s)\|^{2} ds \right] + C h^{2k+2}.
$$
Then with the help of the regularities of $v,p,\psi$ and the boundedness of $\sigma,\bar{\sigma},\lambda,\mu$, we have
\begin{align*}
    \mathcal{S}_{4}(t) &\leq \epsilon \mathbb{E} \left[ \int_{t}^{T} \left( \|\xi_p(\cdot,s)\|^{2} + \|\xi_v(\cdot,s)\|^{2} \right) ds \right] + \mathbb{E} \left[ \int_{t}^{T} \|\xi_u(\cdot,s)\|^{2} ds \right] + C h^{2k+2}\\
    &\leq \epsilon \mathbb{E} \left[ \int_{t}^{T} \left( \|\xi_\psi(\cdot,s)\|^{2} + \|\xi_v(\cdot,s)\|^{2} \right) ds \right] + \mathbb{E} \left[ \int_{t}^{T} \|\xi_u(\cdot,s)\|^{2} ds \right] + C h^{2k+2}.
\end{align*}

\textbf{Estimation of $\mathcal{S}_{5}(t)$.}

It holds that
\begin{align*}
    \mathcal{S}_{5}(t) \leq& \epsilon \mathbb{E} \left[ \int_{t}^{T} \|\xi_v(\cdot,s)\|^{2} ds \right] + \epsilon \mathbb{E} \left[ \int_{t}^{T} \|\xi_\psi(\cdot,s)\|^{2} ds \right] + C \mathbb{E} \left[ \int_{t}^{T} \|\xi_u(\cdot,s)\|^{2} ds \right] + C h^{2k+2}\\
    & + \epsilon \mathbb{E}\left[ \int_t^T \left\| \Gamma(\cdot,s,u,v,\psi) - \Gamma(\cdot,s,u_{h},v_{h},\psi_{h})\right\|^2 ds\right],
\end{align*}
in which
\begin{align*}
\mathbb{E}\left[ \int_t^T \left\| \Gamma(\cdot,s,u,v,\psi) - \Gamma(\cdot,s,u_{h},v_{h},\psi_{h})\right\|^2 ds\right] \leq& \mathbb{E}\left[ \int_t^T \left\| e_u(\cdot,s) \right\|^2 + \left\| e_v(\cdot,s) \right\|^2 + \left\| e_\psi(\cdot,s) \right\|^2 ds\right]\\
\leq& \mathbb{E}\left[ \int_t^T \left\| \xi_u(\cdot,s) \right\|^2 + \left\| \xi_v(\cdot,s) \right\|^2 + \left\| \xi_\psi(\cdot,s) \right\|^2 ds\right] + C h^{2k+2}.
\end{align*}

Concluding the above estimates, for fixed $\epsilon$ small enough, we obtain
\begin{align*}
    &\mathbb{E}\left[\|\xi_u(\cdot,s)\|^2\right] + \mathbb{E}\left[ \int_{t}^{T}  \|\xi_v(\cdot,s)\|^2 ds\right] + \mathbb{E}\left[ \int_{t}^{T}  \|\xi_\psi(\cdot,s)\|^2 ds\right] \leq C \mathbb{E}\left[ \int_{t}^{T} \|\xi_u(\cdot,s)\|^2 ds\right] + C h^{2k+2}.
\end{align*}
By Gronwall's inequality, we derive that 
\begin{equation*}
  \sup_{0\leq t \leq T} \mathbb{E}\left[\|\xi_{u}(\cdot,t)\|^{2}\right]
  \leq Ch^{2k+2},
\end{equation*}
which yields that
\begin{equation*}
  \mathbb{E}
  \Big[\int_{0}^{T}\|\xi_{v}(x,s)\|^{2}ds\Big]
  \leq Ch^{2k+2}\quad \mbox{and}\quad\mathbb{E}
  \Big[\int_{0}^{T}\|\xi_{\psi}(x,s)\|^{2}ds\Big]
  \leq Ch^{2k+2}.
\end{equation*}

On the other hand, as $u\in\mathcal{S}_{\mathscr{F}}^{2}(0,T;H^{k+1,2})$, $v,\psi\in\mathcal{L}_{\mathscr{F}}^{2}(0,T;H^{k+1,2})$, one has
\begin{align}
  &\mathbb{E}\left[ \sup_{0\leq t \leq T} \left\|\eta_{u}(\cdot,t)\right\|^{2} + \int_{0}^{T} \left\| \eta_{v}(\cdot,s)\right\|^{2} ds + \int_0^T \left\| \eta_{\psi}(\cdot,s)\right\|^{2} ds\right]\nonumber\\
  \leq& Ch^{2k+2} \mathbb{E}\left[ \sup_{0\leq t \leq T} \left\| u(\cdot,t) \right\|^{2}_{H^{k+1}} + \int_{0}^{T} \left\| v(\cdot,t) \right\|^{2}_{H^{k+1}} ds + \int_{0}^{T} \left\| \psi(\cdot,t) \right\|^{2}_{H^{k+1}} ds \right] \leq Ch^{2k+2},\label{eq202408081}
\end{align}
then a simple calculation provides
\begin{equation}\label{error_step1_conclusion}
  \begin{aligned}
  \mathop{\text{sup}}_{0\leq t\leq T}\mathbb{E}\left[
  \left\|u(\cdot,t)-u_{h}(\cdot,t)\right\|^{2}
  \right]+
  \mathbb{E}\left[\int_{0}^{T}
  \left\|u_x(\cdot,s)-v_{h}(\cdot,s)\right\|^{2}ds\right]
  +\mathbb{E}\left[\int_{0}^{T}
  \left\|\psi(\cdot,s)-\psi_{h}(\cdot,s)\right\|^{2}ds\right] \leq Ch^{2k+2}.
  \end{aligned}
\end{equation}

Next, based on above results, we are going to estimate $\mathbb{E}\left[ \mathop{\text{sup}}_{0\leq t\leq T} \left\|u(\cdot,t)-u_{h}(\cdot,t)\right\|^{2} \right]$. According to~\eqref{error_step1_error_equation_plugin_0}, we get
\begin{align*}
    \mathbb{E}\left[ \sup_{r\in[t,T]} \|\xi_u(\cdot,r)\|^2 \right] \leq \mathbb{E}\left[ \|\xi_u(\cdot,T)\|^2 \right] + \mathcal{S}_{6}(t) + \mathcal{S}_{7}(t) + \mathcal{S}_{8}(t),
\end{align*}
with
  \begin{align*}
  \mathcal{S}_{6}(t)&= \!2 \mathbb{E}\left[ \int_t^T\!\!\!\! \int_0^b \left| \eta_v\xi_p - \eta_p\xi_v + \!\frac{\sigma^{2}+\bar{\sigma}^{2}}{2} \eta_v \xi_v +\sigma \eta_\psi \xi_v + \lambda e_v \xi_u + \mu e_\psi \xi_u + \left(\Gamma(\cdot,u,v,\psi) - \Gamma(\cdot,u_{h},v_{h},\psi_{h})\right)\xi_u \right| \!(x,s) dx ds \!\right]\!,\\
  \mathcal{S}_{7}(t)&= 2 \mathbb{E}\left[ \sup_{r\in[t,T]} \left| \int_0^b \int_r^T \xi_u(x,s) d \eta_u(x,s)dx \right| \right],\\
  \mathcal{S}_{8}(t)&= 2 \mathbb{E}\left[ \sup_{r\in[t,T]} \left| \int_r^T \int_0^b e_\psi(x,s) \xi_u(x,s) dx dW_s \right| \right].
  \end{align*}
  
We now estimate the above items in turn.

\textbf{Estimation of $\mathcal{S}_{6}(t)$.}

In view the reults in first step, it follows that
\begin{align*}
\mathcal{S}_{6}(t) \leq & C \mathbb{E}\left[ \int_t^T \left\|\eta_v(\cdot,s)\right\|^2 + \left\|\xi_p(\cdot,s)\right\|^2 + \left\|\eta_p (\cdot,s)\right\|^2 + \left\|\xi_v(\cdot,s)\right\|^2 + \left\|\eta_\psi(\cdot,s)\right\|^2  ds \right]\\
& +C \mathbb{E}\left[ \int_t^T \left\|e_v(\cdot,s)\right\|^2 + \left\|\xi_u(\cdot,s)\right\|^2 + \left\|e_\psi(\cdot,s)\right\|^2 + \left\|e_u(\cdot,s)\right\|^2 ds \right] \leq Ch^{2k+2}.
\end{align*}

\textbf{Estimation of $\mathcal{S}_{7}(t)$.}

From~\eqref{error_step1_eta_u_ldg_eq} and BDG inequality, one has
\begin{align*}
    \mathcal{S}_{7}(t) \leq & 2 \mathbb{E} \left[ \sup_{r \in[t,T]} \left| \int_r^T \int_{0}^{b} \xi_{u}(x,s) \left( \mathcal{P}^{-} -\mathcal{I} \right) \psi(x,s) dx dW_{s} \right| \right]\\
    & + 2 \mathbb{E} \left[ \int_t^T \int_{0}^{b} \left| \xi_{u}(x,s)\left(\mathcal{P}^{-}-\mathcal{I}\right) \big( p_x +\lambda v + \mu \psi + \Gamma(\cdot,u,v,\psi) \big)(x,s) \right| dxds \right]\\
    \leq & C \mathbb{E} \left[ \left( \int_t^T \left( \int_{0}^{b} \xi_{u}(x,s) \left( \mathcal{P}^{-} -\mathcal{I} \right) \psi(x,s) dx \right)^2 ds \right)^{\frac{1}{2}} \right]\\
    &+ \mathbb{E} \left[ \int_t^T \left\| \xi_{u}(\cdot,s) \right\|^2 ds \right] + \mathbb{E} \left[ \int_t^T \left\| \left(\mathcal{P}^{-}-\mathcal{I}\right) \big( p_x +\lambda v + \mu \psi + \Gamma(\cdot,u,v,\psi) \big) (\cdot,s) \right\|^2 ds \right]\\
    \leq & C \mathbb{E} \left[ \sup_{s\in[t,T]} \left\| \xi_{u}(\cdot,s) \right\| \left( \int_t^T \left\| \left( \mathcal{P}^{-} -\mathcal{I} \right) \psi(\cdot,s) \right\|^2 ds \right)^{\frac{1}{2}} \right] + \mathbb{E} \left[ \int_t^T \sup_{r\in[s,T]}\left\| \xi_{u}(\cdot,r) \right\|^2 ds \right] + Ch^{2k+2}\\
    \leq & \frac{1}{3}\mathbb{E}\left[ \mathop{\text{sup}}_{r\in[t,T]} \left\|\xi_u(\cdot,r)\right\|^{2} \right] +  \int_t^T \mathbb{E} \left[ \sup_{r\in[s,T]}\left\| \xi_{u}(\cdot,r) \right\|^2 \right] ds + Ch^{2k+2}.
  \end{align*}  
  
\textbf{Estimation of $\mathcal{S}_{8}(t)$.}

Using BDG inequality, we have
\begin{align*}
    \mathcal{S}_{8}(t) \leq & C \mathbb{E} \left[ \left( \int_t^T \left( \int_{0}^{b} e_\psi(x,s) \xi_{u}(x,s) dx \right)^2 ds \right)^{\frac{1}{2}} \right] \leq \frac{1}{3}\mathbb{E}\left[ \mathop{\text{sup}}_{r\in[t,T]} \left\|\xi_u(\cdot,r)\right\|^{2} \right] + Ch^{2k+2}.
\end{align*}

Concluding the above estimates, we get
\begin{equation*}
  \mathbb{E}\left[ \mathop{\text{sup}}_{r\in[t,T]} \left\|\xi_u(\cdot,r)\right\|^{2} \right] \leq C \int_t^T \mathbb{E} \left[ \sup_{r\in[s,T]}\left\| \xi_{u}(\cdot,r) \right\|^2 \right] ds + Ch^{2k+2},
\end{equation*}
then thanks to Gronwall's inequality, it holds
\begin{equation}\nonumber
  \mathbb{E}\left[
  \mathop{\text{sup}}_{0 \leq r\leq T}
  \|\xi_{u}(\cdot,r)\|^{2}\right]
  \leq Ch^{2k+2}.
\end{equation}
Combining with~\eqref{eq202408081} and~\eqref{error_step1_conclusion}, we have
\begin{equation}\nonumber
    \Big(\mathbb{E}\Big[\mathop{\text{sup}}_{0\leq t\leq T}
    \|u(\cdot,t)-u_{h}(\cdot,t)\|^{2}\Big]
    \Big)^{\frac{1}{2}}
    +\Big(\mathbb{E}\Big[\int_{0}^{T}
    \|u_{x}(\cdot,s)-v_{h}(\cdot,s)\|^{2}ds\Big]
    \Big)^{\frac{1}{2}}
    +\Big(\mathbb{E}\Big[\int_{0}^{T}
    \|\psi(\cdot,s)-\psi_{h}(\cdot,s)\|^{2}ds\Big]
    \Big)^{\frac{1}{2}}
    \leq Ch^{k+1}.
\end{equation}

\section{Numerical Experiments}

\subsection{Deep backward dynamic programming algorithm}
After applying the LDG spatial discretization~\eqref{approx_eq_a}-\eqref{approx_eq_d}, we need to solve the high dimensional nonlinear BSDE~\eqref{approx_bsde}. To address the curse of dimensionality associated with using a Monte Carlo scheme for high-dimensional nonlinear BSDEs, we will employ the deep backward dynamic programming algorithm. This algorithm was first introduced in \cite{hure2020deep} and later applied to solve coupled forward-backward stochastic differential equations (FBSDEs) in \cite{molla2023numerical}. Note that BSDE \eqref{approx_bsde} can be formally regarded as an FBSDE with a forward equation that is a one-dimensional Brownian motion. Then the solution $\big(\boldsymbol{u},\boldsymbol{\psi}\big)$ and the terminal condition $\boldsymbol{g}$ of \eqref{approx_bsde} can be regarded as the functionals of Brownian motion $W$, i.e.,
\begin{equation*}
    \boldsymbol{u}_{t}=\boldsymbol{U}(t,W_{t}),\quad
    \boldsymbol{\psi}_{t}=\boldsymbol{\Psi}(t,W_{t}),\quad
    \boldsymbol{g}=\boldsymbol{G}(W_{T}).
\end{equation*}

To give our algorithm, we consider the time division on $[0,T]$ with $0=t_{0}<t_{1}<\cdots<t_{M}=T$ and $\Delta t_{i}=t_{i+1}-t_{i}$. Then for $i=0,1,...,M-1$, we define $X_{t_{i+1}}=X_{t_{i}}+\Delta W_{t_{i}}$ with $X_{t_{0}}=0$ and $W_{t_{i}}=W_{t_{i+1}}-W_{t_{i}}$. Use $\boldsymbol{\hat{U}}, \boldsymbol{\hat{\Psi}}$ to represent the approximate estimates of the functions $\boldsymbol{U}, \boldsymbol{\Psi}$ respectively.
The following are the simple steps of the algorithm:

    \textbf{Step1.} Initialize the estimation term $\boldsymbol{\hat{U}}_{M}(W_{T})$ according to the terminal term $\boldsymbol{G}(W_{T})$.

    \textbf{Step2.} For $i=M-1,...,0$, use a set of neural networks $\big(U_{i}(\cdot\,;\theta_{U}),\Psi_{i}(\cdot\,;\theta_{\Psi})\big)$ to approximate 
     $\big(\boldsymbol{U}_{{i}},\boldsymbol{\Psi}_{{i}}\big)$ such that
     the optimal parameters are obtained by minimizing the following loss function
    \begin{equation}\label{algo_on_nerual_network}
        \left\{
        \begin{aligned}
        &L_{i}(\theta_{U},\theta_{\psi}):=
        \mathbb{E}\Big[\big|
        \boldsymbol{\hat{U}}_{{i+1}}(X_{t_{i+1}})
        -\big(U_{i}(X_{t_{i}};\theta_{U}) -F_{h}\big(t_{i},U_{i}(X_{t_{i}};\theta_{U}),
        \Psi_{i}(X_{t_{i}};\theta_{\Psi})\big)\Delta t_{i}
        +\Psi_{i}(X_{t_{i}};\theta_{\Psi})\Delta W_{t_{i}}\big)
        \big|^{2}\Big],\\
        &(\theta_{U}^{i,\ast},\theta_{\psi}^{i,\ast})\in
        \mathop{\text{argmin}}_{\theta_{U},\theta_{\psi}}
        L_{i}(\theta_{U},\theta_{\psi}).
        \end{aligned}\right.
    \end{equation}
The above algorithm uses two neural networks with independent parameters $\left(U_{i}(\cdot;\theta_{U}),\Psi_{i}(\cdot;\theta_{\Psi})\right)$ and the division on the given time interval to solve the numerical approximation solution backwardly and stepwise.
The single-step loss function is obtained through the following approximation
\begin{equation*}
    u_{t_{i+1}} \approx u_{t_{i}}
    -F_{h}\big(t_{i},u_{t_{i}},\psi_{t_{i}}\big)\Delta t_{i}
    +\psi_{t_{i}}\Delta W_{t_{i}},
\end{equation*}
Finally, $U_{0}(X_{0};\theta_{U}^{0,\ast})$ is the numerical solution of BSDE \eqref{approx_bsde} at $t=0$, which is brought back to the LDG method framework to obtain the numerical solution to equation \eqref{BSPDE_in_Qiu_paper} at time zero.

\begin{remark}
    Since exploring network structures is not the focus of this article, we use a relatively simple network for the numerical experiments, consisting of two fully connected hidden layers and four batch normalization layers. The dimension of the hidden layer is that of the solution space of \eqref{approx_bsde} plus $10$, and the dimension of the output layer is the same as that of the solution space. For $i=0,1,...,M-1$, the parameters $\big(U_{i}(\cdot;\theta_{U}),\Psi_{i}(\cdot;\theta_{\Psi})\big)$ of the neural networks at each time are independent of each other.
\end{remark}

In the algorithm \ref{algo_on_nerual_network} all the sub-networks are four-layer fully connected neural networks, which are $(k+1)\cdot N$-dimensional input layers and two layers of $(k+1)\cdot N +10 $-dimensional hidden layer and $(k+1)\cdot N$-dimensional output layer. Use ReLU as the activation function, and perform batch normalization after each linear transformation and before the activation function.

We use TensorFlow to implement the algorithm \ref{algo_on_nerual_network} and use the Adam optimizer to solve for optimal parameters. All numerical experiments are run on a Legion Y9000P equipped with a 2.2-GHz Intel i9 Core processor, NVIDIA RTX4060 graphics card, and 16GB of memory.

\subsection{Numerical Examples}

\subsubsection{BSPDEs with globally Lipschitz continuous coefficients}

We first consider the nonlinear BSPDEs~\eqref{BSPDE_in_Qiu_paper} on $[0,2\pi]$ with following coefficients
\begin{align}
&\Gamma(x,t,u,v,p)= 
\ln\left(1+e^{v}\right) +\frac{1}{2}\sin^{2}x\cos xe^{-\frac{1}{2}(T-t)}\cos W_{t} - \sin^{2}xe^{-\frac{1}{2}(T-t)}\sin W_{t} - \ln\left(1+e^{-\sin xe^{-\frac{1}{2}(T-t)}\cos W_{t}}\right),\nonumber\\
&\sigma(x,t)=\sin x,\quad\quad \bar{\sigma}(t,x)=1,\quad\quad g(x,t)=0,\quad\quad  G(x)=\cos x\cos W_{T},\label{bspde_ex_1}
\end{align}
which satisfies Lipschitz assumption $(\mathcal{A}_3)$. It is easy to verify that the analytical solution of equation with coefficients~\eqref{bspde_ex_1} is as follows
\begin{equation}\label{exactbspde}
    u(x,t)=\cos xe^{-\frac{1}{2}(T-t)}\cos W_{t},\quad
    \psi(x,t)=-\cos xe^{-\frac{1}{2}(T-t)}\sin W_{t}.
\end{equation}

\begin{figure}[htbp]
\centering
\includegraphics[width=0.49\textwidth]{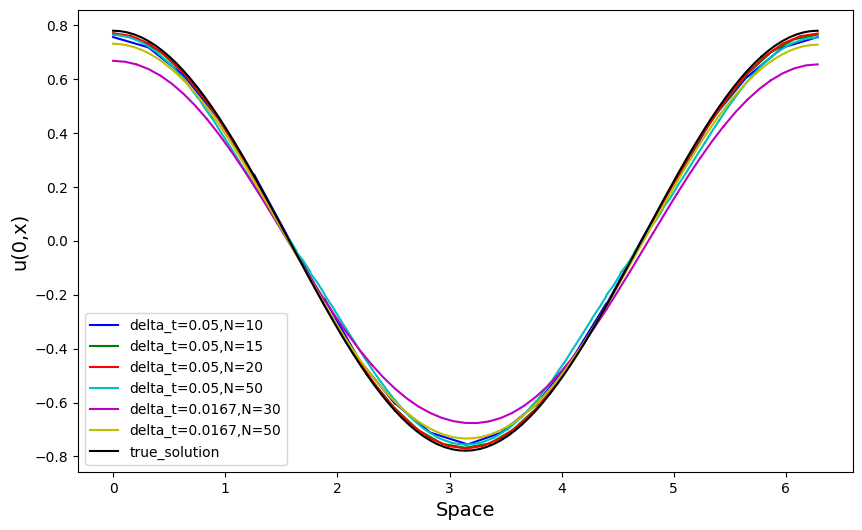}
\includegraphics[width=0.49\textwidth]{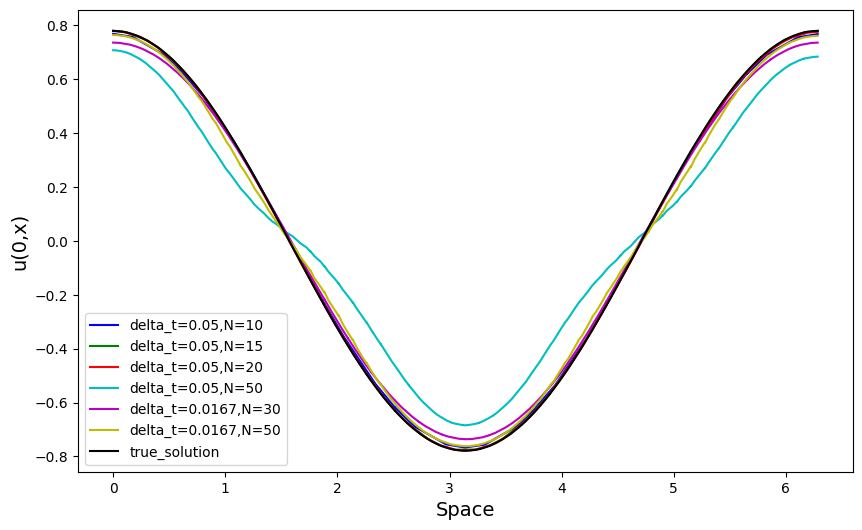}
\caption{Accuracy on \eqref{bspde_ex_1}: $T=0.5$, $k=2$ (left), $k=3$ (right).}
\label{figure1}
\end{figure}

\begin{table}[htbp]
    \centering
    \begin{tabularx}{0.6\textwidth}{|X|X|X|X|}
    \hline
    $N$ & $\Delta_t$ & ${R_E,k = 2}$ & ${R_E,k = 3}$ \\ \hline
    10 &   0.0500 &  0.001114 &  0.000219 \\ \hline
    15 &   0.0500 &  0.000220 &  0.000005 \\ \hline
    20 &   0.0500 &  0.000150 &  0.000025 \\ \hline
    50 &   0.0500 &  0.003989 &  0.053577 \\ \hline
    30 &   0.0167 &  0.019569 &  0.002827 \\ \hline
    50 &   0.0167 &  0.003312 &  0.003200 \\ \hline
    \end{tabularx}
    \caption{Accuracy on \eqref{bspde_ex_1}}
    \label{table1}
\end{table}

On each finite element interval $I_{j}$, let $k=2,3$, that is, choose 2 or 3 degree Lagrange polynomials as the basis. By using deep backward dynamic programming algorithm \eqref{algo_on_nerual_network}, setting
$\Delta t=0.05$ with $N=10,15,20,50$ and $\Delta t=0.0167$ with $N=30,50$, we solve the numerical solution for \eqref{bspde_ex_1} respectively, where $\Delta t$ is the uniform step size of time, $N$ is the number of space divisions, and the solution space dimension is $(k+1)\cdot N$. Use the previous notation, $u_{h}(x, 0)$ represents the numerical solution of $u_{0}(x)$. We consider the following relative error to examine the approximation effect 
\begin{equation*}
    R_{E}=\frac{\int_{0}^{2\pi}
    \left|u_{h}(x,0)-u_{0}(x)\right|^{2}dx}
    {\int_{0}^{2\pi}
    \left|u_{0}(x)\right|^{2}dx}.
\end{equation*}

According to Table~\ref{table1}, as the spatial mesh is smaller, the relative error decreases rapidly. When $\Delta_{t}=0.05$ and the number of divisions $N$ is moderate, the rate of decrease in relative error for $k=3$ is higher than for $k=3$. This is consistent with the conclusion of Theorem~\ref{theorem_ldg_optimal_error}. However, when the space division is too small, such as $\Delta_t=0.05$ with $N=50$, the convergence accuracy is not as good as when the number of divisions is moderate.

\subsubsection{BSPDEs with polynomial growing coefficients}

Now we consider an example in which the globally Lipschitz condition $(\mathcal{A}_{3})$ is not satisfied,
\begin{align}
&\Gamma(x,t,u,v,p)= 
u^2 + \frac{1}{2}\sin^{2}x\cos xe^{-\frac{1}{2}(T-t)}\cos W_{t} - \sin^{2}xe^{-\frac{1}{2}(T-t)}\sin W_{t} - \cos^{2}xe^{-(T-t)}\cos^{2}W_{t},\nonumber\\
&\sigma(x,t)=\sin x,\quad\quad \bar{\sigma}(t,x)=1,\quad\quad g(x,t)=0,\quad\quad  G(x)=\cos x\cos W_{T}.\label{bspde_ex_2}
\end{align}
It is easy to verify the analytical solution of equation with the coefficients~\eqref{bspde_ex_2} is also~\eqref{exactbspde}. 

\begin{figure}[htbp]
\centering
\includegraphics[width=0.49\textwidth]{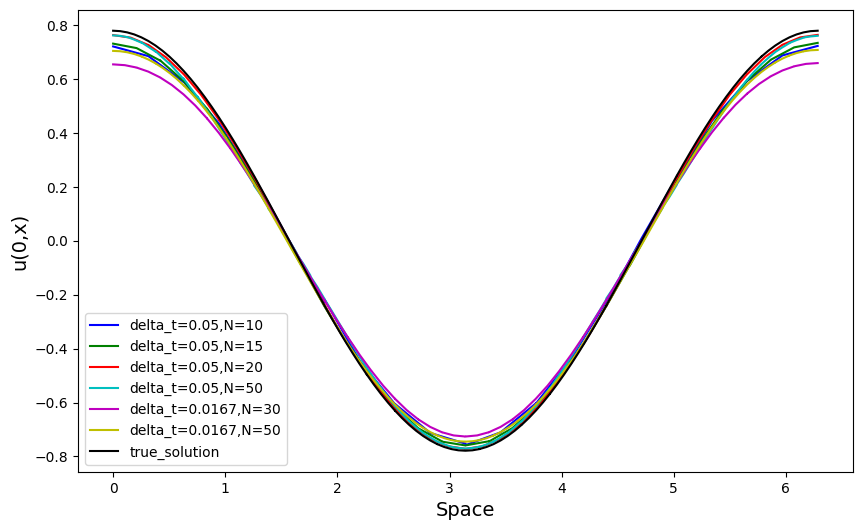}
\includegraphics[width=0.49\textwidth]{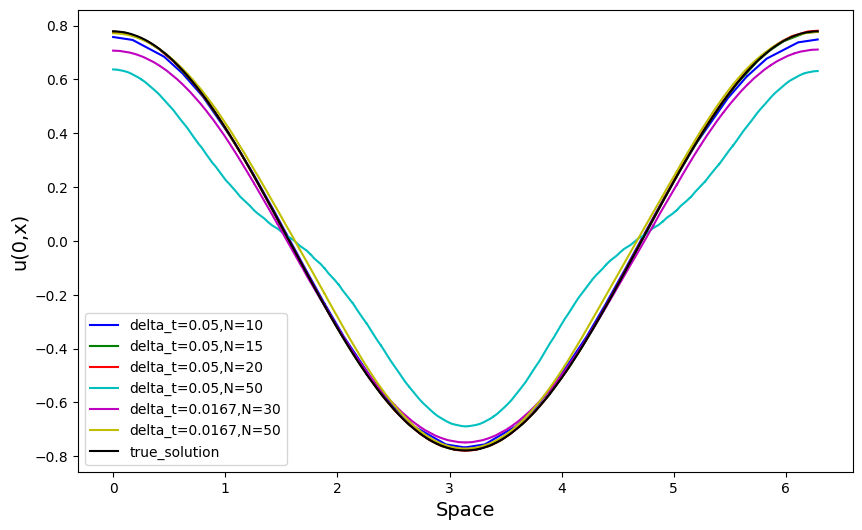}
\caption{Accuracy on \eqref{bspde_ex_2}: $T=0.5$, $k=2$ (left), $k=3$ (right).}
\label{figure2}
\end{figure}

\begin{table}[htbp]
\centering
\begin{tabularx}{0.6\textwidth}{|X|X|X|X|}
\hline
$N$ & $\Delta_t$ & ${R_E,k = 2}$ & ${R_E,k = 3}$ \\ \hline
10 &   0.0500 &  0.002950 &  0.000534 \\ \hline
15 &   0.0500 &  0.002146 &  0.000004 \\ \hline
20 &   0.0500 &  0.000237 &  0.000005 \\ \hline
50 &   0.0500 &  0.002055 &  0.077633 \\ \hline
30 &   0.0167 &  0.012780 &  0.004351 \\ \hline
50 &   0.0167 &  0.004741 &  0.001719 \\ \hline
\end{tabularx}
\caption{Accuracy on \eqref{bspde_ex_2}.}
\label{table2}
\end{table}

According to Figure~\ref{figure1} and Table~\ref{table2}, we observe that even if the nonlinear term of equation \eqref{bspde_ex_2} does not satisfy the globally Lipschitz condition, the LDG method with the deep backward dynamic programming time-marching still gives higher convergence accuracy. 

\begin{remark}
As the time mesh $\Delta_{t}$ decreases, algorithm \ref{algo_on_nerual_network} necessitates solving an increased number of sub-optimization problems, which raises the computational cost. Therefore, the time mesh should not be too small. Additionally, due to the instability in the optimal parameter solutions of Algorithm \ref{algo_on_nerual_network}, verifying the convergence order outlined in Theorem \ref{theorem_ldg_optimal_error} is challenging in this numerical experiment. Further exploration is needed to achieve high-order precision numerical solutions for nonlinear BSPDEs.
\end{remark}


\bibliographystyle{abbrv}
\bibliography{paper}

\begin{thebibliography}{10}

\bibitem{bassi1997high}
F.~Bassi and S.~Rebay.
\newblock A high-order accurate discontinuous finite element method for the numerical solution of the compressible {N}avier-{S}tokes equations.
\newblock {\em J. Comput. Phys.}, 131(2):267--279, 1997.

\bibitem{10.1214/19-AAP1465}
E.~Bayraktar and J.~Qiu.
\newblock Controlled reflected {SDE}s and {N}eumann problem for backward {SPDE}s.
\newblock {\em Ann. Appl. Probab.}, 29(5):2819--2848, 2019.

\bibitem{bellman1966dynamic}
R.~Bellman.
\newblock {\em Dynamic Programming}.
\newblock Princeton University Press, Princeton, NJ, 1957.

\bibitem{Bensoussan83}
A.~Bensoussan.
\newblock Maximum principle and dynamic programming approaches of the optimal control of partially observed diffusions.
\newblock {\em Stochastics}, 9(3):169--222, 1983.

\bibitem{Ciarlet2002TheFE}
P.~G. Ciarlet.
\newblock {\em The Finite Element Method for Elliptic Problems}.
\newblock Society for Industrial and Applied Mathematics (SIAM), Philadelphia, PA, 2002.

\bibitem{cockburn1989tvb}
B.~Cockburn, S.~Y. Lin, and C.-W. Shu.
\newblock T{VB} {R}unge-{K}utta local projection discontinuous {G}alerkin finite element method for conservation laws. {III}. {O}ne-dimensional systems.
\newblock {\em J. Comput. Phys.}, 84(1):90--113, 1989.

\bibitem{cockburn1991runge}
B.~Cockburn and C.-W. Shu.
\newblock The {R}unge-{K}utta local projection {$P^1$}-discontinuous-{G}alerkin finite element method for scalar conservation laws.
\newblock {\em RAIRO Mod\'{e}l. Math. Anal. Num\'{e}r.}, 25(3):337--361, 1991.

\bibitem{cockburn1998local}
B.~Cockburn and C.-W. Shu.
\newblock The local discontinuous {G}alerkin method for time-dependent convection-diffusion systems.
\newblock {\em SIAM J. Numer. Anal.}, 35(6):2440--2463, 1998.

\bibitem{cockburn1998runge}
B.~Cockburn and C.-W. Shu.
\newblock The {R}unge-{K}utta discontinuous {G}alerkin method for conservation laws. {V}. {M}ultidimensional systems.
\newblock {\em J. Comput. Phys.}, 141(2):199--224, 1998.

\bibitem{DuTang12}
K.~Du and S.~Tang.
\newblock Strong solution of backward stochastic partial differential equations in {$C^2$} domains.
\newblock {\em Probab. Theory Related Fields}, 154(1-2):255--285, 2012.

\bibitem{E_2017}
W.~E, J.~Han, and A.~Jentzen.
\newblock Deep learning-based numerical methods for high-dimensional parabolic partial differential equations and backward stochastic differential equations.
\newblock {\em Commun. Math. Stat.}, 5(4):349--380, 2017.

\bibitem{germain2022approximation}
M.~Germain, H.~Pham, and X.~Warin.
\newblock Approximation error analysis of some deep backward schemes for nonlinear {PDE}s.
\newblock {\em SIAM J. Sci. Comput.}, 44(1):A28--A56, 2022.

\bibitem{Han_2018}
J.~Han, A.~Jentzen, and W.~E.
\newblock Solving high-dimensional partial differential equations using deep learning.
\newblock {\em Proc. Natl. Acad. Sci. USA}, 115(34):8505--8510, 2018.

\bibitem{HuMaYong}
Y.~Hu, J.~Ma, and J.~Yong.
\newblock On semi-linear degenerate backward stochastic partial differential equations.
\newblock {\em Probab. Theory Related Fields}, 123(3):381--411, 2002.

\bibitem{HuPengBSEE}
Y.~Hu and S.~G. Peng.
\newblock Adapted solution of a backward semilinear stochastic evolution equation.
\newblock {\em Stochastic Anal. Appl.}, 9(4):445--459, 1991.

\bibitem{hure2020deep}
C.~Hur\'{e}, H.~Pham, and X.~Warin.
\newblock Deep backward schemes for high-dimensional nonlinear {PDE}s.
\newblock {\em Math. Comp.}, 89(324):1547--1579, 2020.

\bibitem{li2022high}
Y.~Li.
\newblock A high-order numerical method for {BSPDE}s with applications to mathematical finance.
\newblock {\em SIAM J. Financial Math.}, 13(1):147--178, 2022.

\bibitem{MaYongPTRF}
J.~Ma and J.~Yong.
\newblock On linear, degenerate backward stochastic partial differential equations.
\newblock {\em Probab. Theory Related Fields}, 113(2):135--170, 1999.

\bibitem{molla2023numerical}
H.~U. Molla and J.~Qiu.
\newblock Numerical approximations of coupled forward-backward {SPDE}s.
\newblock {\em Stochastic Anal. Appl.}, 41(2):291--326, 2023.

\bibitem{novak1997curse}
E.~Novak and K.~Ritter.
\newblock The curse of dimension and a universal method for numerical integration.
\newblock In {\em Multivariate approximation and splines ({M}annheim, 1996)}, volume 125 of {\em Internat. Ser. Numer. Math.}, pages 177--187. Birkh\"{a}user, Basel, 1997.

\bibitem{novak2008tractability}
E.~Novak and H.~Wo\'{z}niakowski.
\newblock {\em Tractability of Multivariate Problems. Volume I: Linear Information}.
\newblock European Mathematical Society (EMS), Z\"{u}rich, 2008.

\bibitem{Peng92SHJB}
S.~G. Peng.
\newblock Stochastic {H}amilton-{J}acobi-{B}ellman equations.
\newblock {\em SIAM J. Control Optim.}, 30(2):284--304, 1992.

\bibitem{QiuTang}
J.~Qiu and S.~Tang.
\newblock Maximum principle for quasi-linear backward stochastic partial differential equations.
\newblock {\em J. Funct. Anal.}, 262(5):2436--2480, 2012.

\bibitem{sun2024generalized}
Y.~Sun, W.~Zhao, and W.~Zhao.
\newblock A generalized finite element {$\theta$}-scheme for backward stochastic partial differential equations and its error estimates.
\newblock {\em ESAIM Math. Model. Numer. Anal.}, 58(1):23--46, 2024.

\bibitem{Tang1998}
S.~Tang.
\newblock The maximum principle for partially observed optimal control of stochastic differential equations.
\newblock {\em SIAM J. Control Optim.}, 36(5):1596--1617, 1998.

\bibitem{Tessitore96}
G.~Tessitore.
\newblock Existence, uniqueness and space regularity of the adapted solutions of a backward {SPDE}.
\newblock {\em Stochastic Anal. Appl.}, 14(4):461--486, 1996.

\bibitem{yang2020finite}
X.~Yang and W.~Zhao.
\newblock Finite element methods for nonlinear backward stochastic partial differential equations and their error estimates.
\newblock {\em Adv. Appl. Math. Mech.}, 12(6):1457--1480, 2020.

\bibitem{zhang2017backward}
J.~Zhang.
\newblock {\em Backward Stochastic Differential Equations}.
\newblock Springer, New York, 2017.

\bibitem{XYZ93}
X.~Y. Zhou.
\newblock On the necessary conditions of optimal controls for stochastic partial differential equations.
\newblock {\em SIAM J. Control Optim.}, 31(6):1462--1478, 1993.

\end{thebibliography}

\end{document}